\numberwithin{equation}{section}
\newtheorem{theorem}{Theorem}[section]
\newtheorem{proposition}[theorem]{Proposition}
\newtheorem{lemma}[theorem]{Lemma}
\newtheorem{question}[theorem]{Question}
\theoremstyle{definition}
\newtheorem{example}[theorem]{Example}
\newtheorem{definition}[theorem]{Definition}
\newtheoremstyle{customNumber}
     {}          
     {}          
     {\itshape}  
     {}          
     {\bfseries} 
     {.}         
     { }         
     {\thmname{#1}\thmnumber{ #2}\thmnote{ #3}}
\theoremstyle{customNumber}
\renewcommand{\O}{\mathcal O}
\renewcommand{\phi}{\varphi}
\renewcommand{\rho}{\varrho}
\DeclareMathOperator{\R}{\mathbb{R}}
\DeclareMathOperator{\sgn}{sgn}
\newcommand{\norm}[1]{\|#1\|}
\newcommand{\abs}[1]{\lvert#1\rvert}
\DeclareMathOperator\Id{Id}
\DeclareMathOperator\intt{int}
\title{Conical geodesic bicombings on subsets of normed vector spaces}
\author{Giuliano Basso and Benjamin Miesch}
\date{\today}
\begin{document}


\maketitle


\begin{abstract} 
We prove existence and uniqueness results for conical geodesic bicombings on subsets of normed vector spaces. Concerning existence,  we give a first example of a non-consistent convex geodesic bicombing.  Furthermore, we show that under a mild geometric assumption on the norm, a conical bicombing on an open subset of a normed space locally consists of linear geodesics. As an application, we obtain by the use of a Cartan-Hadamard type result that if a closed convex subset of a Banach space has non-empty interior, then it admits a unique consistent conical geodesic bicombing, namely the one given by linear segments.
\end{abstract}


\section{Introduction}
Given a metric space \(X=(X, d)\), we say that $\sigma\colon X\times X\times [0,1]\to X$ is a (geodesic) \textit{bicombing} if for all \(p\), \(q\in X\), the path $\sigma_{pq}(\cdot):=\sigma(p,q,\cdot)$ satisfies
\begin{equation*}
\sigma_{pq}(0)=p,\quad \sigma_{pq}(1)=q, \quad \textrm{and} \quad d(\sigma_{pq}(t), \sigma_{pq}(s))=\abs{t-s}\cdot d(p, q)
\end{equation*}
for all $s,t\in [0,1]$. Essentially, bicombings distinguish a class of geodesics in a metric space. For example, the collection of all oriented linear segments of a Banach space is a bicombing. In this article we mainly work with bicombings satisfying the following weak, but non-coarse, non-positive curvature condition: We say that $\sigma$ is \textit{conical} if 
\begin{equation}\label{Eq:conical}
d(\sigma_{pq}(t), \sigma_{p'q'}(t)) \leq (1-t)\cdot d(p,p')+t\cdot d(q,q')
\end{equation}
for all $p$, $q$, $p'$, $q'\in X$ and all $t\in [0,1]$. As it seems to be a common misconception, we already caution the reader here that (\ref{Eq:conical}) generally does \emph{not} imply that the function $t \mapsto d(\sigma_{pq}(t),\sigma_{p'q'}(t))$ is convex. This will become clear from some of the examples we construct in this article. 

The study of spaces with distinguished geodesics goes back to the influential work of Busemann \cite{busemann1987spaces}. Recently, the notion of a conical bicombing was coined by Lang \cite{lang2013injective} in connection with injective metric spaces (also called hyperconvex spaces by some authors), where conical bicombings are obtained naturally. Other examples of spaces with a conical bicombing are convex subsets of normed spaces and Busemann spaces. However, the class of metric spaces that admit conical bicombings is by no means limited to these examples. It easily follows that it is closed under ultralimits, \(\ell_\infty\)-products and also 1-Lipschitz retractions. 

In the past century, notions related to conical bicombings have been extensively considered in metric fixed point theory, most notable W-convexity mappings \cite{takahashi1970}, and hyperbolic spaces in the sense of Reich and Shafrir \cite{REICH1990537}. As a more recent trend, some authors became interested in transferring classical results from the theory of CAT(0) spaces to metric spaces with a conical bicombing \cite{basso2015fixed, descombes2015asymptotic, descombes2015flats, miesch2015cartan, kell2016sectional}. Most interestingly, it seems worth to point out that the study of conical bicombings may also lead to new results about word hyperbolic groups. Descombes and Lang \cite{lang1} proved that every Gromov hyperbolic group acts geometrically on a proper, finite dimensional metric space with a unique consistent conical bicombing (the definitions are recalled below). The question whether hyperbolic groups act geometrically on CAT(0) spaces is one of the most important open questions in geometric group theory. 

Our first result deals with convex bicombings. From now on, we abbreviate $D(X):=X\times X\times [0,1]$. We say that a bicombing $\sigma\colon D(X)\to X$ is \textit{convex} if the function $t\mapsto d(\sigma_{pq}(t), \sigma_{p'q'}(t))$ is convex on $[0,1]$ for all $p$, $q$, $p'$, $q'$ in $X$. Clearly, any convex bicombing is also conical. However, if the underlying metric space is not uniquely geodesic, then a conical  bicombing is not necessarily convex. Examples of non-convex conical  bicombings are ubiquitous; for instance, such bicombings may be obtained via 1-Lipschitz retractions of linear segments (see \cite[Example 2.2]{lang1} or the constructions in Section~\ref{Sec:reversible}). 

In \cite{lang1}, it is shown that metric spaces of finite combinatorial dimension in the sense of Dress \cite{dress} possess at most one convex  bicombing. If it exists, this unique convex  bicombing, say $\sigma\colon D(X)\to X$, is \textit{consistent}, that is, for all $p$, $q$ in \(X\) we have that $\sigma_{p'q'}([0,1])\subset \sigma_{pq}([0,1])$ whenever $p'=\sigma_{pq}(s)$ and $q'=\sigma_{pq}(t)$ with $s \leq t$. Clearly, every consistent conical  bicombing is convex. In Section \ref{Sec:convex_non-consistent}, we show that the converse does not hold by proving the following theorem.  

\begin{theorem}\label{Thm:convex_non-consistent}
There is a compact metric space that admits a convex  bicombing which is not consistent. 
\end{theorem}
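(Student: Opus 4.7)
The plan is to exhibit a concrete compact metric space $X$ together with a specific bicombing $\sigma$ that will be verified by hand to be convex but to violate consistency at a chosen quadruple of points. The geometric obstruction that we want to exploit is the failure of unique geodesicity: the space must be rich enough that between some pair $p,q$ there are several geodesics, $\sigma_{pq}$ being one of them, while for a subpair $p' = \sigma_{pq}(s)$, $q' = \sigma_{pq}(t)$ the bicombing chooses a genuinely different geodesic whose image escapes $\mathrm{im}(\sigma_{pq})$. Since every uniquely geodesic convex bicombing is automatically consistent, such an example must live in a space whose distance is not strictly convex along geodesics, and the natural place to look is inside a finite-dimensional normed space with a non-strictly convex norm, such as a compact polytope in $(\R^2, \norm{\cdot}_\infty)$, or a similar two-dimensional flat piece built by gluing a few such polytopes.

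Concretely, I would take $X$ to be a small compact ``diamond'' or rectangular piece of the $\ell_\infty$-plane (or a union of two such pieces glued along a common segment) and define $\sigma$ piecewise. The bicombing $\sigma_{pq}$ for the distinguished pair would be a broken, non-linear $\ell_\infty$-geodesic (for instance a two-segment path bending through an interior corner of $X$), while for all \emph{other} pairs of points $\sigma_{p'q'}$ would be defined to be the straight linear segment between $p'$ and $q'$. With this rule, choosing $p',q'$ on $\sigma_{pq}$ so that the linear segment $[p',q']$ does not lie on the broken path $\sigma_{pq}$ gives non-consistency immediately by construction.

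The serious work is verifying that the map $t \mapsto d(\sigma_{pq}(t), \sigma_{p'q'}(t))$ is convex on $[0,1]$ for \emph{every} quadruple $p,q,p',q' \in X$. Since the bicombing is piecewise (linear on generic pairs, broken on the distinguished pair and its ``conjugates'' obtained by symmetries of $X$), I would proceed by a finite case analysis over the combinatorial types of the pair $((p,q), (p',q'))$, using the explicit formula for the $\ell_\infty$-distance between two parametrized segments — which is always a maximum of two piecewise-linear functions, hence convex when the parametrizations are affine — and then separately checking the mixed cases where one side is the broken geodesic. Symmetry of the chosen configuration should drastically cut down the number of cases.

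The main obstacle is precisely this balance: the constraints imposed by convexity of $t\mapsto d(\sigma_{pq}(t), \sigma_{p'q'}(t))$ must be compatible with the prescribed deviation of $\sigma_{p'q'}$ from $\sigma_{pq}$ for the special subpair, whereas on any uniquely geodesic subset these constraints force consistency. This is why the construction is delicate and why $X$ is not simply a convex subset of a strictly convex normed space; the choice of norm, of the polytope $X$, and of the exact rule distinguishing the ``broken'' geodesic from the generic linear ones must be finely tuned so that each case in the verification comes out convex while the one targeted subpair witnesses the failure of consistency.
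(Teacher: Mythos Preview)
Your plan has a genuine gap that would prevent the construction from working as stated. Convexity of the bicombing implies the conical inequality, and hence the map $(p,q)\mapsto\sigma_{pq}(t)$ is continuous for every fixed $t$: if $(p_n,q_n)\to(p,q)$ then $d(\sigma_{pq}(t),\sigma_{p_nq_n}(t))\le(1-t)d(p,p_n)+td(q,q_n)\to 0$. Consequently you cannot have an isolated ``distinguished pair'' (or finitely many symmetric copies) on which $\sigma$ is broken while all nearby pairs receive the linear segment; taking $(p_n,q_n)\to(p,q)$ would force $\sigma_{pq}$ to be linear as well. Any non-consistent bicombing must deform the linear geodesic over a \emph{continuous family} of pairs, with the deviation vanishing at the boundary of that family.

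Even after fixing this, the choice of a pure $\ell_\infty$-norm together with piecewise-linear broken geodesics does not survive the convexity check. For example, if $\sigma_{pq}$ is the two-segment path from $(0,0)$ through $(1,h)$ to $(2,0)$ and $\sigma_{p'q'}$ is the linear segment from $(0,\epsilon)$ to $(2,\epsilon)$ with $0<\epsilon<h$, then $t\mapsto\norm{\sigma_{pq}(t)-\sigma_{p'q'}(t)}_\infty$ has a strict local maximum at $t=\tfrac12$, so it is not convex. The kink in the broken geodesic translates directly into a kink (pointing the wrong way) in the distance function. The paper's construction avoids both obstacles: it uses a mixed norm $\max\{|x|,\tfrac{\sqrt{2}}{2}\norm{\cdot}_2\}$ and a \emph{smooth} parabolic excursion whose height $\delta\max\{d(p,q)-4,0\}$ depends continuously on the endpoints and vanishes when $d(p,q)\le 4$. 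This makes the deviation $C^1$ rather than broken and allows a Taylor-type estimate to push the convexity verification through; the $\ell_2$ part of the norm is what absorbs the second-order curvature of the parabola. Your case-analysis strategy is the right shape, but the specific ingredients you propose (single pair, $\ell_\infty$, piecewise linear) must all be replaced before the computation can succeed.
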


Although there is a non-consistent convex  bicombing on the space considered in Section \ref{Sec:convex_non-consistent}, this space also admits a consistent convex  bicombing. We suspect that this is a general phenomenon.

\begin{question}\label{Question:properConsistent}
	Let $X$ be a proper metric space with a convex  bicombing. Does $X$ also admit a consistent convex  bicombing?
\end{question}

Recall that a metric space is called proper if every of its closed bounded subsets is compact. 

We now turn our attention to the natural notion of a reversible bicombing. A bicombing $\sigma$ is called \textit{reversible} if $\sigma_{pq}(t)=\sigma_{qp}(1-t)$ for all $p$, $q\in X$ and all $t\in [0,1]$. Notice that for reversible bicombings to establish the conical inequality \eqref{Eq:conical}, it suffices to show that
\[
d(\sigma_{vp}(t), \sigma_{vq}(t)) \leq t\cdot d(p, q)
\]
for all \(v\), \(p\), \(q\in X\) and all \(t\in [0,1]\). Thus, one might ask if every conical bicombing is automatically reversible. However, as it turns out this is not the case. A simple example of a non-reversible concial bicombing is constructed in Section~\ref{Sec:reversible}. Moreover, in Proposition \ref{Prop:convex'}, by modifying our non-consistent convex bicombing from Theorem~\ref{Thm:convex_non-consistent} slightly, we even obtain a first example of a non-reversible convex bicombing. 

In \cite{basso2015fixed}, a barycentric construction has been used to obtain fixed point theorems in metric spaces admitting conical  bicombings. This barycentric construction motivated the following definition. We say that $\sigma$ has the \textit{midpoint property} if $\sigma_{pq}(\frac{1}{2})=\sigma_{qp}(\frac{1}{2})$ for all $p,q$ in $X$. It seems natural to ask if at least every conical  bicombing that has the midpoint property is automatically reversible. However, in Lemma~\ref{lem:midpoint-property}, we show that this is also not the case, by constructing a non-reversible conical bicombing which has the midpoint property. On the positive side, we finish Section~\ref{Sec:reversible} with the following proposition. 

\begin{proposition}\label{Prop:reversible}
	Let $X$ be a complete metric space with a conical  bicombing. Then $X$ also admits a reversible conical  bicombing.
\end{proposition}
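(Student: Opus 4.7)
The approach is to iteratively symmetrize $\sigma$. Define the operator $T$ on conical bicombings by
\[ (T\sigma)_{pq}(t) := \sigma_{\sigma_{pq}(t),\,\sigma_{qp}(1-t)}\!\bigl(\tfrac{1}{2}\bigr). \]
First I would check that $T\sigma$ is again a conical bicombing. The endpoint conditions $(T\sigma)_{pq}(0)=p$ and $(T\sigma)_{pq}(1)=q$ follow from $\sigma_{xx}\equiv x$. The conical inequality is obtained by applying the conical property of $\sigma$ at parameter $t$ to the pairs $(p,q),(p',q')$ and at parameter $1-t$ to the pairs $(q,p),(q',p')$, then combining the two resulting bounds via conicality at parameter $\tfrac{1}{2}$; the two contributions sum to give exactly $(1-t)d(p,p')+t\,d(q,q')$. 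Constant speed of $t\mapsto(T\sigma)_{pq}(t)$ then follows by the standard triangle-inequality saturation argument using the correct endpoints.

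Next I would record two key observations. First, $\sigma$ is reversible if and only if $T\sigma=\sigma$: the equation $\sigma_{pq}(t)=\sigma_{x,y}(\tfrac{1}{2})$ with $x=\sigma_{pq}(t)$ and $y=\sigma_{qp}(1-t)$ forces $d(x,y)=0$. Second, and more usefully, if $\sigma$ merely satisfies the midpoint property then $T\sigma$ is automatically reversible, since $\sigma_{xy}(\tfrac{1}{2})=\sigma_{yx}(\tfrac{1}{2})$ gives $(T\sigma)_{pq}(t)=(T\sigma)_{qp}(1-t)$. I would then iterate, setting $\sigma^{(n)}:=T^{n}\sigma$. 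The elementary estimate
\[ d\bigl(\sigma^{(n+1)}_{pq}(t),\,\sigma^{(n)}_{pq}(t)\bigr)=\tfrac{1}{2}\,D_n(p,q,t),\qquad D_n(p,q,t):=d\bigl(\sigma^{(n)}_{pq}(t),\,\sigma^{(n)}_{qp}(1-t)\bigr), \]
together with the identity $D_{n+1}(p,q,t)=D_n(x,y,\tfrac{1}{2})$ for the same $x,y$, yields the monotonicity $D_{n+1}\leq D_n$. Once one establishes $D_n\to 0$ pointwise, completeness of $X$ produces the limit $\bar\sigma_{pq}(t):=\lim_{n}\sigma^{(n)}_{pq}(t)$; the conical inequality passes to pointwise limits, the constant-speed property follows again by triangle-inequality saturation, and $D_n\to 0$ forces $\bar\sigma_{pq}(t)=\bar\sigma_{qp}(1-t)$, so that $\bar\sigma$ is a reversible conical geodesic bicombing.

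The principal obstacle is precisely the claim $D_n\to 0$ pointwise, since the monotonicity bound alone is consistent with $D_n$ approaching some positive limit, in which case $\sigma^{(n)}_{pq}(t)$ would fail to be Cauchy. The route I would pursue is to first preprocess $\sigma$ by an auxiliary midpoint-symmetrization procedure—using completeness of $X$ in a parallel Cauchy argument—to produce a conical bicombing satisfying the midpoint property, after which a single application of $T$ yields the desired reversible conical bicombing by the second observation above. A more direct alternative is to sharpen the monotonicity $D_{n+1}\leq D_n$ into a genuine geometric contraction by exploiting the fact that each $\sigma^{(n)}_{pq}(t)$ is pinned at distance $t\,d(p,q)$ from $p$, so that any persistent reversibility defect would force an unbounded displacement of $\sigma^{(n)}_{pq}(t)$ inside the bounded sphere of radius $t\,d(p,q)$ around $p$—a tension which, combined with completeness, should prevent a strictly positive asymptotic defect.
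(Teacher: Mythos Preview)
Your strategy is essentially the paper's, and the gap you flag is not a real obstacle: you simply missed the sharp estimate. With $x=\sigma^{(n)}_{pq}(t)$, $y=\sigma^{(n)}_{qp}(1-t)$ and $z:=\sigma^{(n)}_{yx}(\tfrac12)$, compare $\sigma^{(n)}_{xy}(\tfrac12)$ not to $\sigma^{(n)}_{yx}(\tfrac12)$ but to the constant geodesic $\sigma^{(n)}_{zz}$. Conicality gives
\[
D_{n+1}(p,q,t)=d\bigl(\sigma^{(n)}_{xy}(\tfrac12),z\bigr)\le \tfrac12 d(x,z)+\tfrac12 d(y,z)=\tfrac12 d(x,y)=\tfrac12 D_n(p,q,t),
\]
since $z$ is a midpoint of $x$ and $y$. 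So $D_n\le 2^{-n}D_0$, the sequence $\sigma^{(n)}_{pq}(t)$ is Cauchy, and your limit $\bar\sigma$ exists and is reversible as you argue. Your ``sphere tension'' alternative is unnecessary and, as stated, not a proof.

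The paper organizes the same idea more cleanly: rather than iterating the operator $T$ on the whole bicombing, it first builds a symmetric midpoint map $m(x,y)$ by iterating $x_{n+1}:=\sigma(x_n,y_n,\tfrac12)$, $y_{n+1}:=\sigma(y_n,x_n,\tfrac12)$, using exactly the comparison above to get $d(x_{n+1},y_{n+1})\le\tfrac12 d(x_n,y_n)$ and hence a common limit. It then sets $\tau_{pq}(t):=m(\sigma_{pq}(t),\sigma_{qp}(1-t))$ once and for all; reversibility is immediate from $m(x,y)=m(y,x)$, and conicality follows from the contraction property of $m$. This is precisely your ``preprocess by midpoint-symmetrization'' route, carried out with the missing $\tfrac12$-contraction supplied.
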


This generalizes the result for proper metric spaces established in \cite{descombes2015asymptotic}. Next, we consider conical bicombings on convex subsets of normed spaces. Quite surprisingly, it follows directly from a result of Gähler and Murphy \cite[Theorem 1]{murphy} that the only conical bicombing on a normed vector space is the one consisting of linear segments. With a mild geometric assumption on the norm, we obtain in Section~\ref{Sec:uniqueness} a local version of this uniqueness result. 

\begin{theorem}\label{Thm:linearInterior}
Let $V$ be a normed vector space such that its unit ball is the closed convex hull of its extreme points. Let $A\subset V$  and \(\sigma\) a conical bicombing on \(A\). If \(p_0\in A\) and $r\geq 0$ are such that $B_{2r}(p_0)\subset A$, then on \(B_r(p_0)\) the bicombing \(\sigma\) is given by linear segments, that is, $\sigma(p,q,t)=(1-t)p+tq$ for all $p,q\in B_{r}(p_0)$ and all $t\in [0,1]$. 
\end{theorem}

We do not know if Theorem~\ref{Thm:linearInterior} remains true if we drop the assumption that the unit ball of \(V\) is the closed convex hull of its extreme points. But how common is this property? By an application of the theorems of Banach-Alaoğlu and Kreĭn-Mil’man, it follows that the unit ball of a dual Banach space is the closed convex hull of its extreme points. Moreover, using a classification result, due to Nachbin, Goodner, and Kelley, see \cite{kelley1952banach}, and a result of Goodner, see \cite[Theorem 6.4]{goodner1950projections}, it is readily verified that Theorem~\ref{Thm:linearInterior} also holds for every injective Banach space. We remark in passing that the classical Mazur-Ulam theorem is a direct consequence of Theorem~\ref{Thm:linearInterior}, as every isometry between two normed spaces extends to an isometry between their linear injective hulls, which are injective Banach spaces and by the above satisfy the assumptions of Theorem~\ref{Thm:linearInterior}.

In \cite{miesch2015cartan}, the second named author generalized the classical Cartan-Hadamard theorem to metric spaces that locally admit a consistent convex  bicombing. With Theorem~\ref{Thm:linearInterior} at hand, it is possible to use this generalized Cartan-Hadamard theorem to obtain the following uniqueness result.

\begin{theorem}\label{Thm:uniqueness}
Let $E$ be a Banach space such that its unit ball is the closed convex hull of its extreme points. Suppose that $C\subset E$ is a closed convex subset of $E$ with non-empty interior. If $\sigma$ is a consistent conical  bicombing on \(C\), then $\sigma(p,q,t)=(1-t)p+tq$ for all $p,q\in C$ and all $t\in[0,1]$. 
\end{theorem}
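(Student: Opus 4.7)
The plan is to combine Theorem~\ref{Thm:linearInterior} with the generalized Cartan-Hadamard theorem of \cite{miesch2015cartan} and a density argument. Set $U := \mathrm{int}(C)$; by hypothesis $U$ is non-empty, and since $C$ is a closed convex subset of a Banach space with non-empty interior, $U$ is dense in $C$. Moreover, $C$ is complete (as a closed subset of $E$) and simply connected (being convex). The linear bicombing $\ell(p,q,t) := (1-t)p+tq$ takes values in $C$ by convexity and is readily checked to be a consistent, convex, conical geodesic bicombing; the goal is to show $\sigma = \ell$.

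The key local step is as follows. Fix any $p_0 \in U$ and choose $r > 0$ with $B_{2r}(p_0) \subset U \subset C$. Theorem~\ref{Thm:linearInterior}, applied with $A = C$, yields $\sigma(p,q,t) = (1-t)p + tq$ for all $p, q \in B_{r}(p_0)$ and all $t \in [0,1]$. Hence $\sigma$ and $\ell$, which are both consistent convex geodesic bicombings on the complete simply connected space $C$ (consistency of $\sigma$ implies convexity of $\sigma$, as noted in the introduction), coincide on a neighborhood of every point of the open dense subset $U$. Invoking the Cartan-Hadamard type theorem of \cite{miesch2015cartan} --- which asserts that a consistent convex geodesic bicombing on a complete simply connected metric space is uniquely determined by its local structure --- then forces $\sigma(p,q,t) = (1-t)p + tq$ for all $p, q \in U$ and $t \in [0,1]$.

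To extend to all of $C$, observe that the conical inequality $d(\sigma(p,q,t), \sigma(p',q',t)) \leq (1-t)\, d(p,p') + t\, d(q,q')$ makes $\sigma$ jointly continuous in $(p,q)$ uniformly in $t$. Given arbitrary $p, q \in C$, pick sequences $p_n, q_n \in U$ with $p_n \to p$ and $q_n \to q$; the identity $\sigma(p_n, q_n, t) = (1-t)p_n + t\, q_n$ from the previous step passes to the limit and yields $\sigma(p,q,t) = (1-t) p + t q$, completing the proof.

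The step I expect to be the main obstacle is the Cartan-Hadamard invocation: Theorem~\ref{Thm:linearInterior} only produces local agreement of $\sigma$ with $\ell$ at interior points of $C$, so one has to verify that the globalization statement of \cite{miesch2015cartan} can indeed be driven by local data supported only on the open dense subset $U$. Should the abstract statement not apply in exactly this form, the same conclusion on $U$ can be reached by a direct open-and-closed argument on $[0,1]$: the set $\{T \in [0,1] : \sigma(p,q,s) = (1-s)p + sq \text{ for all } s \in [0,T]\}$ is nonempty and closed by continuity, and it is open by applying Theorem~\ref{Thm:linearInterior} at the interior point $(1-T)p+Tq \in U$ (together with the conical estimate and consistency of $\sigma$, which force the sub-geodesic of $\sigma_{pq}$ between $\sigma_{pq}(T-\epsilon)$ and $\sigma_{pq}(T+\epsilon)$ to be a straight segment through $(1-T)p+Tq$); by connectedness it equals $[0,1]$.
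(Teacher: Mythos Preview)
Your main line of argument is essentially the paper's: use Theorem~\ref{Thm:linearInterior} to see that $\sigma$ is linear on small balls around interior points, feed this into the Cartan--Hadamard theorem of \cite{miesch2015cartan} (quoted here as Theorem~\ref{Thm:Cartan-Hadamard}) to get $\sigma_{pq}=\ell_{pq}$ for $p,q\in U:=\intt(C)$, and finish by density. You also correctly isolate the one genuine obstacle: Theorem~\ref{Thm:Cartan-Hadamard} needs a convex \emph{local} bicombing defined on \emph{all} of $C$, not just on $U$. The paper resolves this by a small trick that you are missing. Fix $p,q\in U$; for $z\in \intt(C)$ set $r_z=\tfrac12\,d(z,C\setminus\intt(C))$, while for $z\in C\setminus\intt(C)$ set $r_z=d(z,[p,q])>0$ (positive because $[p,q]\subset U$). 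With $U':=\bigcup_{z\in C}\mathsf{D}(U_{r_z}(z))$ the restriction $\sigma^{\mathrm{loc}}:=\sigma|_{U'}$ is a convex local bicombing on all of $C$: on interior balls it is linear by Theorem~\ref{Thm:linearInterior}, and on boundary balls it is just $\sigma$ itself, which is consistent and conical by hypothesis. Now $\sigma_{pq}$ is consistent with $\sigma^{\mathrm{loc}}$ because $\sigma$ is globally consistent, and $\ell_{pq}$ is consistent with $\sigma^{\mathrm{loc}}$ because on interior balls $\sigma^{\mathrm{loc}}$ is linear, while the boundary balls were chosen so that $U_{r_z}(z)\cap[p,q]=\varnothing$ and the condition is vacuous there. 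Theorem~\ref{Thm:Cartan-Hadamard} then forces $\sigma_{pq}=\ell_{pq}$.

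Your fallback open--and--closed argument, however, has a real gap at $T=0$. The openness step uses the sub-geodesic between $\sigma_{pq}(T-\epsilon)$ and $\sigma_{pq}(T+\epsilon)$, which is unavailable when $T=0$. Local linearity and consistency only tell you that $\sigma_{pq}$ leaves $p$ along \emph{some} straight segment of the correct speed; in a non--strictly--convex norm this need not point towards $q$ (think of $\ell^\infty$, where many broken-line paths from $p$ to $q$ are geodesics). So without an extra ingredient you cannot rule out that the maximal $T$ equals $0$, and the connectedness argument does not get off the ground. The Cartan--Hadamard route, once patched as above, is the cleaner way through.
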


The proof of Theorem~\ref{Thm:uniqueness} is carried out in Section \ref{sec:ProofofTheorem1.4}. In Example~\ref{Ex:twoDistinct} we use a non-affine isometry first introduced by Schechtman to construct two distinct consistent conical  bicombings on a closed convex subset \(B\subset L^1([0,1])\) with empty interior. As it is possible to consider \(B\) as a subset of the injective hull of \(L^1([0,1])\), it follows that the assumption in Theorem~\ref{Thm:uniqueness} of \(C\) having non-empty interior is necessary. 

Due to Theorems \ref{Thm:linearInterior} and \ref{Thm:uniqueness} it appears that the geometry of a convex subset \(C\) with non-empty interior is very restricted in the sense that it is difficult to construct a conical  bicombing on \(C\) that is not given by linear segments. In this perspective, we suspect that a negative answer to the following question would result in an interesting geometric construction. 

\begin{question}
Let \(C\subset E\) be a convex subset of a Banach space \(E\). Suppose that \(C\) has non-empty interior. Is it true that \(C\) admits only one conical
 bicombing?
\end{question}

\subsection{Acknowledgements} We would like to thank Urs Lang for introducing us to conical geodesic bicombings and for his helpful remarks and guidance. We are also thankful for helpful suggestions of the anonymous referee. The authors gratefully acknowledge support from the Swiss National Science Foundation.


\section{A non-consistent convex  bicombing}\label{Sec:convex_non-consistent}


The aim of this section is to construct a convex  bicombing that is not consistent and therefore establish Theorem~\ref{Thm:convex_non-consistent}. To this end, we consider the following norm on $\mathbb{R}^2$:
\[
	\| (x,y) \| := \max \big\{ |x|,\, \tfrac{\sqrt{2}}{2}\cdot \| (x,y) \|_2 \big\},
\]
	where $\norm{\,\cdot\,}_2$ is the standard Euclidean norm. This should be thought of as an interpolation between a strictly convex norm and a polyhedral norm. Observe that $\| (x,y) \| = |x|$ if and only if $|y| \leq |x|$. In the following, we consider
\[
	X := \left\{ (x,y) \in \mathbb{R}^2 : -3 \leq x \leq 3, \, 0 \leq y \leq \tfrac{1}{32} \max \{0,1-x^2\} \right\}
\]
equipped with the metric $d$ induced by $\norm{\,\cdot\, }$, see Figure~\ref{Fig:convex_non-consistent}. The space $X$ naturally splits into three parts, namely $X=X_- \cup X_0 \cup X_+$ where
\[
X_0 :=\left\{ (x,y) \in \mathbb{R}^2 : -1 < x < 1, 0 \leq y \leq \tfrac{1}{32} (1-x^2) \right\},
\]
\(X_- := [-3,-1] \times \{0\}\) and	\(X_+ := [1,3] \times \{0\}\). 
We consider the following family of bicombings on \(X\).

\begin{figure}[t]
\labellist
\small\hair 2pt
\pinlabel $p$ [b] at 21 15
\pinlabel $q$ [b] at 310 15
\pinlabel {$\sigma_{pq}^\delta$} [bl] at 225 25
\endlabellist
\centering
\includegraphics[scale=0.9]{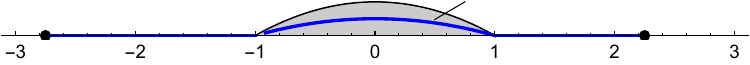}
\caption{The metric space $X$ with a geodesic $\sigma_{pq}^\delta$.}
\label{Fig:convex_non-consistent}
\end{figure}
	
\begin{definition}	
	For $0\leq \delta \leq \frac{1}{64}$ we define $\sigma^\delta \colon D(X) \to X$ as follows. We take $\sigma_{pq}^\delta$ to be the geodesic from $p$ to $q$ which is linear inside $X_0$, but if both endpoints lie on the antennas $X_-$, $X_+$ then we slightly modify it, see Figure~\ref{Fig:convex_non-consistent}. More precisely, $\sigma^\delta$ is defined as follows: For $p:=(p_x,p_y)$, $q:=(q_x,q_y) \in X$ with $p_x \leq q_x$ let \(\sigma_{pq}^\delta(t) := \left( x_{pq}(t),y_{pq}(t) \right)\) and \(\sigma_{qp}^\delta(t):= \sigma_{pq}^\delta(1-t)\), with 
	\[
			x_{pq}(t) := p_x + t(q_x-p_x), 
	\]
			
	and 
	\begin{align*}
		y_{pq}(t) &:= \begin{cases}		
		\delta \max\{q_x-p_x-4,0\} \max \{0,(1 - x_{pq}(t)^2) \} &\text{ for } p \in X_-,\, q \in X_+, \\[0.3em]
		\max \{0 , \frac{q_y}{q_x+1}(x_{pq}(t) + 1)\} &\text{ for } p \in X_-,\, q \in X_0, \\[0.3em]
		\max \{0 , \frac{p_y}{p_x-1}(x_{pq}(t) - 1)\} &\text{ for } p \in X_0,\, q \in X_+, \\[0.3em]
		p_y + t(q_y-p_y) &\text{ for } p,\, q \in X_0,\\[0.3em]
		0 &\text{ otherwise. }
		\end{cases}
	\end{align*}
\end{definition}

Observe that for $\delta=0$, $\sigma^\delta$ coincides with the piecewise linear bicombing, which is the unique consistent conical bicombing on $X$ by Theorem~\ref{Thm:uniqueness}. Hence, we have constructed a family of non-consistent convex  bicombings $\sigma^\delta$ converging to the unique consistent convex  bicombing $\sigma^0$.

\begin{proposition}\label{Prop:convex}
For any $\delta \in (0,\frac{1}{64}]$, the bicombing $\sigma^\delta$ is reversible and convex but not consistent.
\end{proposition}

Alternatively, we can also modify the geodesics leading from $X_+$ to $X_-$ so that we lose the reversibility. 
	
\begin{definition}
Define $\tilde{\sigma}^\delta \colon D(X) \to X$ by \(\tilde{\sigma}_{pq}^\delta(t) = \sigma_{pq}^\delta(t)\), except for $p \in X_+$ and $q \in X_-$ we let \(\tilde{\sigma}_{pq}^\delta(t) = (x_{pq}(t),0)\).
\end{definition}

As it turns out this new bicombing is still convex. 

\begin{proposition}\label{Prop:convex'}
For any $\delta \in (0,\frac{1}{64}]$, the map $\tilde{\sigma}^\delta$ is a convex  bicombing which is neither reversible nor consistent.
\end{proposition}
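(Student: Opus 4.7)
Non-reversibility is a one-line check. For \(p = (3,0) \in X_+\) and \(q = (-3,0) \in X_-\), the pair \((p,q)\) falls in the modified branch, so \(\tilde\sigma_{pq}^\delta(\tfrac12) = (0,0)\); the pair \((q,p)\) uses the unchanged forward formula (since \(q_x < p_x\)) and yields \(\tilde\sigma_{qp}^\delta(\tfrac12) = \sigma_{qp}^\delta(\tfrac12) = (0, 2\delta)\). These disagree for \(\delta > 0\). Non-consistency carries over directly from Proposition~\ref{Prop:convex}: the witness used there arises from the forward bumped curve \(\sigma_{pq}^\delta\) with \(p \in X_-\), \(q \in X_+\) together with the linear geodesic between two of its points in \(X_0\), and the modification \(\sigma^\delta \rightsquigarrow \tilde\sigma^\delta\) changes neither of these two curves (it only affects pairs with first coordinate in \(X_+\) and second in \(X_-\)).

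\textbf{Convexity, easy cases.} Call \((a,b) \in X \times X\) a \emph{flat-antenna pair} if \(a \in X_+\) and \(b \in X_-\), so that \(\tilde\sigma^\delta\) and \(\sigma^\delta\) agree off flat-antenna pairs. I verify convexity of \(t \mapsto d(\tilde\sigma_{pq}^\delta(t), \tilde\sigma_{p'q'}^\delta(t))\) by splitting on how many of \((p,q)\) and \((p',q')\) are flat-antenna. If neither is, the statement reduces to Proposition~\ref{Prop:convex}. If both are, both curves lie on \(\mathbb{R} \times \{0\}\) and the distance equals \(|x_{pq}(t) - x_{p'q'}(t)|\), the absolute value of an affine function, hence convex. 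If exactly one pair is flat-antenna—say \((p,q)\)—then \(\tilde\sigma_{pq}^\delta(t) = (1-t)p + tq\) is affine in \(\mathbb{R}^2\), and the other curve \(B(t) := \sigma_{p'q'}^\delta(t)\) is of one of the explicit types appearing in the definition of \(\sigma^\delta\). When \(B\) is also affine (both endpoints on a single antenna or both in \(X_0\)), convexity is immediate because the integrand is the norm of an affine \(\mathbb{R}^2\)-valued function. When \(B\) is piecewise affine with one kink at \((\pm 1,0)\) (the mixed \(X_\pm \leftrightarrow X_0\) pairs), convexity holds on each affine piece and at the kink reduces to a one-sided derivative comparison read off from the explicit parametrization.

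\textbf{The main obstacle.} The only genuinely non-trivial sub-case is \((p',q')\) a forward antenna-to-antenna pair (\(p' \in X_-\), \(q' \in X_+\)), so that \(B\) is the bumped curve. Here
\[
d(\tilde\sigma_{pq}^\delta(t), B(t)) = \max\!\Bigl(|a(t)|,\ \tfrac{\sqrt{2}}{2}\sqrt{a(t)^2 + h'(t)^2}\Bigr),
\]
where \(a(t) := x_{pq}(t) - x_{p'q'}(t)\) is affine in \(t\) and \(h'(t) := 2\delta\max\{0, 1 - x_{p'q'}(t)^2\} \leq 2\delta \leq \tfrac{1}{32}\) is the bump of \(B\). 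Wherever \(|a(t)| \geq h'(t)\) the maximum equals \(|a(t)|\), which is convex; on the complementary subinterval, substituting the quadratic form of \(h'\) (in the affine variable \(x_{p'q'}(t)\)) into \(a^2 + h'^2\) reduces convexity of \(\tfrac{\sqrt{2}}{2}\sqrt{a^2 + h'^2}\) to a direct calculus check in which the assumption \(\delta \leq 1/64\) controls the error terms. Matching one-sided derivatives at the switching points \(|a| = h'\) then yields convexity on all of \([0,1]\). This interaction between the flattened antenna segment and the bumped forward curve is the main technical obstacle, since it is the only place where the modification introduces a qualitatively new distance function compared to \(\sigma^\delta\).
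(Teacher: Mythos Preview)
Your decomposition is equivalent to the paper's: both reduce everything to the single new pairing---a forward bumped curve (from $X_-$ to $X_+$) against the new flat segment (from $X_+$ to $X_-$)---with all remaining cases covered either by Proposition~\ref{Prop:convex} directly or by the ``both curves piecewise linear'' argument used there. One small slip: the bump coefficient is $D'=\delta(l'-4)$, not the fixed value $2\delta$; your bound $h'\leq 2\delta$ is still correct, so nothing downstream is affected. For the piecewise-affine subcase, note that the kinks occur exactly where one curve has $x$-coordinate $\pm 1$, so the distance there equals $|x_{pq}(t)-x_{p'q'}(t)|$; the local midpoint inequality then follows from convexity of this affine absolute value together with $d\geq |x_{pq}-x_{p'q'}|$, which is cleaner than an ad hoc one-sided derivative comparison.

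Where your sketch stops short is the main obstacle itself. You assert that a direct calculus check goes through for $\delta\leq 1/64$ but do not isolate the mechanism. The paper carries out the computation in the same Taylor-expansion framework as in the proof of Proposition~\ref{Prop:convex}: because the two $x$-coordinates move in \emph{opposite} directions, the second-order constant acquires leading term $4(1+\lambda)^2$ (rather than $4(1-\lambda)^2$), with $\lambda=l'/l$. Using $D\leq 2\delta$, $|y_0|\leq\tfrac{1}{32}$, $|x_0|\le 1$, and $|x_0-x_0'|\leq |y_0|$ on the relevant region, the negative correction terms are bounded by $33\delta(1+\lambda)^2 y_0^2$, so the constant stays positive for all $\delta<4/33$, comfortably covering $\delta\leq 1/64$. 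Your remark that the non-$|a|$ region is short because $|a'|=l+l'$ is large is the same insight in different language, but the estimate still needs to be written out for the proof to be complete.
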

The quite technical proofs of Propositions~\ref{Prop:convex} and \ref{Prop:convex'} are given in the appendix. 
Let us proceed by the simple proof showing that we actually have defined  bicombings.
	
\begin{lemma}\label{Lem:bicombing}
For any $\delta \in [0,\frac{1}{64}]$, the maps $\sigma^\delta$ and $\tilde{\sigma}^\delta$ are bicombings.
\end{lemma}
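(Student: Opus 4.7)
The plan is to verify for $\sigma^\delta$ (and then the minor modification $\tilde\sigma^\delta$) the three defining properties: the endpoint conditions, containment in $X$, and the constant-speed geodesic property. The endpoint conditions $\sigma^\delta_{pq}(0)=p$, $\sigma^\delta_{pq}(1)=q$ are a direct check in each case of the definition. The key structural observation for the geodesic property is that the first coordinate $x_{pq}(t)=p_x+t(q_x-p_x)$ is affine, and by the defining identity $\|(x,y)\|=|x|$ iff $|y|\leq|x|$, the equality $\|\sigma^\delta_{pq}(t)-\sigma^\delta_{pq}(s)\|=|t-s|\,\|q-p\|$ follows once one verifies the two inequalities
\begin{equation*}
|q_y-p_y|\leq q_x-p_x \quad\text{and}\quad |y_{pq}(t)-y_{pq}(s)|\leq |t-s|(q_x-p_x).
\end{equation*}
Together these give $\|q-p\|=q_x-p_x$ and $\|\sigma^\delta_{pq}(t)-\sigma^\delta_{pq}(s)\|=|x_{pq}(t)-x_{pq}(s)|$.

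I would then check the two inequalities case by case, following the branches of the definition of $y_{pq}$. For $p\in X_-,\,q\in X_+$, we have $p_y=q_y=0$, and the scalar $\alpha:=\delta\max\{q_x-p_x-4,0\}$ satisfies $\alpha\leq 2\delta\leq 1/32$, while the function $g(x):=\max\{0,1-x^2\}$ is $2$-Lipschitz on $\R$; hence $|y_{pq}(t)-y_{pq}(s)|=\alpha|g(x_{pq}(t))-g(x_{pq}(s))|\leq 2\alpha|x_{pq}(t)-x_{pq}(s)|$, and $2\alpha\leq 1$ completes the estimate. For $p\in X_-,\,q\in X_0$, the assumption $q_y\leq (1-q_x^2)/32$ yields $q_y\leq (1+q_x)/16\leq 1+q_x\leq q_x-p_x$, which is the first inequality; the map $x\mapsto \max\{0,\tfrac{q_y}{q_x+1}(x+1)\}$ is $\tfrac{q_y}{q_x+1}$-Lipschitz, and the same estimate shows this constant is at most $1$. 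The case $p\in X_0,\,q\in X_+$ is symmetric. In the remaining branches the path is a linear segment, either inside $X_0$ (which is convex as the subgraph of the concave function $\frac{1}{32}(1-x^2)$ intersected with $\{y\geq 0\}$) or along the $x$-axis inside one of the antennas $X_\pm$, so both inequalities are immediate.

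Containment in $X$ is a short extra check: in the first case $\alpha\leq 1/32$ gives $y_{pq}(t)\leq \max\{0,1-x_{pq}(t)^2\}/32$ directly; in the second case, for $x_{pq}(t)\leq -1$ the point lies in $X_-$, and for $x_{pq}(t)>-1$ the inequality $\tfrac{q_y}{q_x+1}(x_{pq}(t)+1)\leq (1-x_{pq}(t)^2)/32$ reduces after factoring out $1+x_{pq}(t)$ to $\tfrac{q_y}{q_x+1}\leq (1-x_{pq}(t))/32$, which is most restrictive at $x_{pq}(t)=q_x$ and there amounts to the $X_0$-condition $q_y\leq(1-q_x^2)/32$. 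The reversal formula $\sigma^\delta_{qp}(t):=\sigma^\delta_{pq}(1-t)$ extends the construction to all of $\mathsf{D}(X)$ and preserves the geodesic property. For $\tilde\sigma^\delta$ only the branch $p\in X_+,\,q\in X_-$ differs, and there the modified path $(x_{pq}(t),0)$ is the linear segment along the $x$-axis from $p$ to $q$, which trivially lies in $X$ and has constant speed $|p_x-q_x|=\|q-p\|$. The only mildly nontrivial point in the whole argument is the Lipschitz estimate in the first case, which is exactly where the bound $\delta\leq 1/64$ from the statement is used.
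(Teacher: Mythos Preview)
Your proof is correct and follows essentially the same approach as the paper: reduce the geodesic property to showing that the $y$-variation is dominated by the $x$-variation so that the norm collapses to $|x|$, and verify this via the slope bound in the piecewise-linear case and the Lipschitz bound on $\max\{0,1-x^2\}$ (combined with $\delta\le \tfrac{1}{64}$) in the parabolic case. You are actually more thorough than the paper in also checking image containment in $X$.

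One small imprecision: in the branch $p,q\in X_0$ you say ``both inequalities are immediate,'' but the inequality $|q_y-p_y|\le q_x-p_x$ can fail there (take $p_x=q_x$, $p_y\ne q_y$). This does not harm the argument, since in that branch $\sigma^\delta_{pq}$ is the linear segment $(1-t)p+tq$, which is a constant-speed geodesic in any normed space regardless of whether your two auxiliary inequalities hold; the paper simply says ``the linear case is clear.'' Just rephrase that sentence to invoke this fact directly rather than the inequalities.
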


\begin{proof}
	The linear case is clear. For the piecewise linear case observe that if $p \in X_-$, $q\in X_0$ (and similarly in all other cases) we have that the slope $m$ of $\sigma_{pq}^\delta$ satisfies
\[
		m = \frac{q_y}{q_x+1} \leq \frac{\frac{1}{32}(1-q_x^2)}{1+q_x} \leq  1
\]
and therefore
	$$
	d(\sigma_{pq}^\delta(s),\sigma_{pq}^\delta(t)) = |x_{pq}(s)-x_{pq}(t)| = |s-t|\cdot|q_x-p_x| = |s-t|d(p,q).
	$$
Finally, let $p \in X_-$, $q \in X_+$. For $x,x' \in [-1,1]$ we have 
	\begin{align*}
		| \delta(q_x-p_x-4) (1 - x^2) - \delta(q_x-p_x-4) (1 - x'^2) | \\
		\leq \delta\cdot |q_x-p_x-4| \cdot |x+x'|\cdot|x-x'| &\leq \tfrac{1}{16} |x-x'|
	\end{align*}
	and hence $d(\sigma_{pq}^\delta(s),\sigma_{pq}^\delta(t)) = |x_{pq}(s)-x_{pq}(t)|$.
\end{proof}

It is immediate that both families consist only of non-consistent bicombings. Furthermore, $\sigma^\delta$ is reversible and $\tilde{\sigma}^\delta$ is not. Hence, it remains to prove convexity in both cases. For given $p$, $q$, $p'$, $q' \in X$ we need to show that $f(t):= d(\sigma_{pq}^\delta(t),\sigma_{p'q'}^\delta(t))$ is convex on \([0,1]\). To this end, we use the following well-known characterization of convexity; see e.g. \cite[Lemma~3.5]{li2010some}.
	
\begin{lemma}\label{Lem:locally convex}
	A continuous function $f\colon [0,1] \to \mathbb{R}$ is convex if and only if for every $t \in (0,1)$ there is some $\tau_0 > 0$ such that  
	\begin{align*}
		2f(t) \leq f(t-\tau) + f(t+\tau)
	\end{align*}
	for all $\tau \in [0,\tau_0]$.
\end{lemma}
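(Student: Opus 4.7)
The plan is to handle the two directions of the equivalence separately. The forward direction is essentially immediate: convexity of $f$ on $[0,1]$ directly yields the midpoint inequality $2f(t) \leq f(t-\tau) + f(t+\tau)$ for every $t \in (0,1)$ and every $\tau \in [0,\min(t,1-t)]$, so I take $\tau_0 := \min(t,1-t)$ and there is nothing more to do.

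For the nontrivial direction I will argue by contradiction. Assume that $f$ satisfies the local midpoint condition but is not convex. Then there exist $a<b$ in $[0,1]$ and some $c\in(a,b)$ with $f(c) > L(c)$, where $L$ is the affine function determined by $L(a)=f(a)$ and $L(b)=f(b)$. Setting $g := f - L$, the function $g$ is continuous on $[a,b]$, vanishes at the endpoints, and attains a strictly positive maximum $M := \max_{[a,b]} g$. Moreover, $g$ inherits the local midpoint inequality from $f$ at every interior point of $(0,1)$, because $L$ is affine and so $L(t-\tau)+L(t+\tau)=2L(t)$ contributes zero to $g(t-\tau)+g(t+\tau)-2g(t)$.

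The key step is to choose $t^*$ to be the \emph{rightmost} point in $[a,b]$ where $g$ attains the value $M$. The set of such maximizers is nonempty, closed by continuity, and bounded, hence contains its supremum; moreover $t^*\in(a,b)\subset(0,1)$ since $g(a)=g(b)=0<M$. By the maximality of $t^*$ we have the strict inequality $g(s) < M$ for every $s\in(t^*,b]$. I then apply the local midpoint condition to $g$ at $t^*$, shrinking the provided $\tau_0>0$ if necessary so that $[t^*-\tau_0,t^*+\tau_0]\subset(a,b)$. For any $\tau\in(0,\tau_0]$ the trivial bound $g(t^*-\tau)\leq M$ combined with $2M = 2g(t^*) \leq g(t^*-\tau) + g(t^*+\tau)$ forces $g(t^*+\tau)\geq M$, contradicting $g(t^*+\tau) < M$.

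The main — indeed only — subtle point, and what I regard as the heart of the argument, is the insistence on the \emph{rightmost} (or equivalently leftmost) maximizer of $g$: a generic maximizer would give only the two non-strict bounds $g(t^*\pm\tau)\leq M$, which are consistent with the midpoint inequality and produce no contradiction. This one-sided strict maximality is precisely what converts the local inequality hypothesis into the global convexity conclusion.
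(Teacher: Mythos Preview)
Your proof is correct. The paper does not supply its own argument for this lemma; it merely cites Lemma~3.5 in \cite{li2010some}. Your contradiction argument via the rightmost maximizer of $g=f-L$ on $[a,b]$ is a standard and clean way to establish the nontrivial direction, and the observations you flag (that affine $L$ preserves the local midpoint inequality, and that one must pick an \emph{extreme} maximizer to obtain a strict inequality on one side) are exactly the points that make the argument work.
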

Notice that if
$d(\sigma_{pq}^\delta(t),\sigma_{p'q'}^\delta(t)) = |x_{pq}(t)-x_{p'q'}(t)|$, then 
\begin{align*}
	2d(\sigma_{pq}^\delta(t),\sigma_{p'q'}^\delta(t)) &= 2|x_{pq}(t)-x_{p'q'}(t)| \\
	&\leq |x_{pq}(t-\tau)-x_{p'q'}(t-\tau)|+|x_{pq}(t+\tau)-x_{p'q'}(t+\tau)| \\
	&\leq d(\sigma_{pq}^\delta(t-\tau),\sigma_{p'q'}^\delta(t-\tau))+d(\sigma_{pq}^\delta(t+\tau),\sigma_{p'q'}^\delta(t+\tau)),
\end{align*}
as  $t \mapsto |x_{pq}(t)-x_{p'q'}(t)|$ is convex on \([0,1]\).
Hence, in view of Lemma~\ref{Lem:locally convex}, to show convexity it remains to check that
\begin{equation}\label{Eq:ConvexityCondition}
\begin{split}
	&2 \|\sigma_{pq}^\delta(t)-\sigma_{p'q'}^\delta(t)\|_2 \\
	&\leq \|\sigma_{pq}^\delta(t-\tau)-\sigma_{p'q'}^\delta(t-\tau)\|_2+\|\sigma_{pq}^\delta(t+\tau)-\sigma_{p'q'}^\delta(t+\tau)\|_2
\end{split}
\end{equation}
for $\tau > 0$ sufficiently small, whenever $d(\sigma_{pq}^\delta(t),\sigma_{p'q'}^\delta(t)) = \tfrac{\sqrt{2}}{2} \|\sigma_{pq}^\delta(t)-\sigma_{p'q'}^\delta(t)\|_2$.

\begin{figure}[b]
\labellist
\small\hair 2pt
\pinlabel $p$ [b] at 9 15
\pinlabel $p'$ [b] at 68 15
\pinlabel $q'$ [b] at 296 15
\pinlabel $q$ [b] at 353 15
\endlabellist
\centering
\includegraphics[scale=0.9]{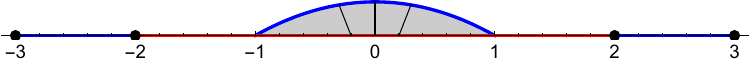}
\caption{The function $t \mapsto d(\sigma_{pq}^\delta(t),\sigma_{p'q'}^\delta(t))$ is convex.}
\label{Fig:convexity}
\end{figure}

In this case, the main reason for convexity is that the modification in the $y$-direction is controlled by the speed difference in the $x$-direction. To illustrate this, let us consider $\sigma_{pq}^\delta$ and $\sigma_{p'q'}^\delta$ for \(p=(-3,0),\,\, q=(3,0),\,\, p'=(-2,0),\) and  \(q'=(2,0)\). Note that for $t \in [\frac{1}{3},\frac{2}{3}]$, $\sigma_{pq}^\delta(t)$ lies on the (concave) parabola $2\delta (1-x^2)$ while $\sigma_{p'q'}^\delta$ describes a linear segment on the $x$-axis.
	However, e.g. for $t= \frac{1}{2}$, we have
	\begin{align*}
		\|\sigma_{pq}(\tfrac{1}{2}\pm \tau)-\sigma_{p'q'}(\tfrac{1}{2}\pm\tau)\|_2^2 &= (3\tau-2\tau)^2 + 4 \delta^2 (1-9 \tau^2)^2 \\
		&= 4 \delta^2 + (1-72 \delta^2)\tau^2+324 \delta^2\tau^4 \\
		&\geq 4 \delta^2 = \|(\sigma_{pq}(\tfrac{1}{2})-\sigma_{p'q'}(\tfrac{1}{2})\|_2^2,
	\end{align*}
and consequently, \eqref{Eq:ConvexityCondition} follows. A similar calculation can also be carried out for all other pairings of geodesics of the bicombing. To this end, we shall distinguish several cases. This is done in the appendix, where detailed proofs of Propositions~\ref{Prop:convex} and \ref{Prop:convex'} are given.


\section{Reversibility of conical  bicombings}\label{Sec:reversible}


In this section, we present a simple example of a non-reversible conical  bicombing.  Then we modify this bicombing to satisfy the midpoint property while still being non-reversible. We finish the section with the proof of Proposition~\ref{Prop:reversible}.

To construct our non-reversible bicombing we need to introduce quite a bit of notation. In the following, we consider $\R^2$ equipped with the maximum norm \(\norm{\,\cdot\,}_\infty\) and we frequently use the reflection $s\colon \R^2\to \R^2$ about the \(x\)-axis defined by $(x,y)\mapsto (x,-y)$. Further, we set 
\begin{align*}
X &:= \big\{ (x,y)\in \R^2 : x\in[-2,1] \textrm{ and } \abs{x}-1\leq y \leq \abs{\abs{x}-1}\big\}, \\
A &:= \big\{ (x,y)\in \R^2 : \abs{x+1} \leq y \leq 1 \big\},
\end{align*}
and $X' := s(X)$, $A' := s(A)$. Notice that \(A\) and \(A'\) are filled triangles. Moreover, the union $X \cup X'$ is equal to a filled square with two antennas attached, see Figure~\ref{Fig:non-reversible}.
By construction, $f\colon X' \to X$ defined by
\begin{align*}
(x,y) \mapsto 
\begin{cases}
(x,y),  &\textrm{if } x \in [-1,1], \\
s(x,y), &\textrm{if } x \in [-2,-1]
\end{cases}
\end{align*}
is an isometry and the map $\bar{f} \colon X \cup X' \to X$ which by definition is equal to $\Id_{X}$ on \(X\) and equal to $f$ on \(X'\), is clearly 1-Lipschitz. We set \(Y:= X\cup A\) and \(Y':=X'\cup A'\), and we define the vertical projection $\pi\colon Y\cup Y'\to X \cup X'$ through the assignment
\begin{equation*}
(x,y)\mapsto \left(x, \sgn(y)\min\big\{ \abs{y}, \abs{\abs{x}-1}\big\}\right).
\end{equation*}
Observe that $\pi$ is a $1$-Lipschitz retraction that maps $Y$ to $X$ and \(Y'\) to \(X'\). Now, we are in a position to define our non-reversible conical bicombing.

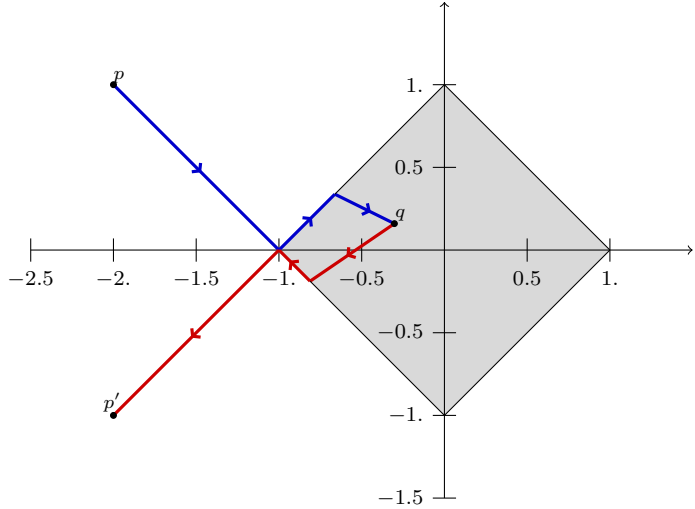
\begin{figure}[h]
\begin{center}
\definecolor{ccqqqq}{rgb}{0.8,0.,0.}
\definecolor{qqqqcc}{rgb}{0.,0.,0.8}
\begin{tikzpicture}[scale=2.2][line cap=round,line join=round,>=triangle 45,x=1.0cm,y=1.0cm]
\draw[->,color=black] (-2.5,0.) -- (1.5,0.);
\foreach \x in {-2.5,-2.,-1.5,-1.,-0.5,0.5,1.}
\draw[shift={(\x,0)},color=black] (0pt,2pt) -- (0pt,-2pt) node[below] {\footnotesize $\x$};
\draw[->,color=black] (0.,-1.5) -- (0.,1.5);
\foreach \y in {-1.5,-1.,-0.5,0.5,1.}
\draw[shift={(0,\y)},color=black] (2pt,0pt) -- (-2pt,0pt) node[left] {\footnotesize $\y$};
\clip(-2.5,-1.5) rectangle (1.5,1.5);
\fill[line width=0.pt,fill=black,fill opacity=0.15] (0.,1.) -- (1.,0.) -- (0.,-1.) -- (-1.,0.) -- cycle;
\draw [line width=1.2pt,color=qqqqcc] (-0.6621159940084833,0.33788400599151674)-- (-0.3023175665969685,0.1598205174219592);
\draw [line width=1.2pt,color=qqqqcc] (-0.45524417198530953,0.23550357431148639) -- (-0.49890263954679004,0.21513650130996748);
\draw [line width=1.2pt,color=qqqqcc] (-0.45524417198530953,0.23550357431148639) -- (-0.46553092105866156,0.28256802210350845);
\draw [line width=1.2pt,color=qqqqcc] (-1.,0.)-- (-0.6621159940084833,0.33788400599151674);
\draw [line width=1.2pt,color=qqqqcc] (-0.8097776180764485,0.19022238192355115) -- (-0.8044575233445005,0.1423415293360169);
\draw [line width=1.2pt,color=qqqqcc] (-0.8097776180764485,0.19022238192355115) -- (-0.8576584706639826,0.19554247665549945);
\draw [line width=1.2pt,color=qqqqcc] (-2.,1.)-- (-1.,0.);
\draw [line width=1.2pt,color=qqqqcc] (-1.4787196210722073,0.4787196210722073) -- (-1.5266004736597414,0.47339952634025895);
\draw [line width=1.2pt,color=qqqqcc] (-1.4787196210722073,0.4787196210722073) -- (-1.473399526340259,0.5266004736597415);
\draw [line width=1.2pt,color=ccqqqq] (-0.8117720522997935,-0.18822794770020657)-- (-1.,0.);
\draw [line width=1.2pt,color=ccqqqq] (-0.9271664050776903,-0.07283359492231044) -- (-0.8792855524901562,-0.06751350019036216);
\draw [line width=1.2pt,color=ccqqqq] (-0.9271664050776903,-0.07283359492231044) -- (-0.9324864998096383,-0.12071444750984471);
\draw [line width=1.2pt,color=ccqqqq] (-0.3023175665969685,0.1598205174219592)-- (-0.8117720522997935,-0.18822794770020657);
\draw [line width=1.2pt,color=ccqqqq] (-0.5818943748827438,-0.03118040952714361) -- (-0.578265677433406,0.01685824165382985);
\draw [line width=1.2pt,color=ccqqqq] (-0.5818943748827438,-0.03118040952714361) -- (-0.5358239414633559,-0.0452656719320771);
\draw [line width=0.4pt] (0.,1.)-- (1.,0.);
\draw [line width=0.4pt] (1.,0.)-- (0.,-1.);
\draw [line width=0.4pt] (-0.8117720522997935,-0.18822794770020662)-- (0.,-1.);
\draw [line width=0.4pt] (-0.6621159940084833,0.33788400599151674)-- (0.,1.);
\draw [line width=1.2pt,color=ccqqqq] (-1.,0.)-- (-2.,-1.);
\draw [line width=1.2pt,color=ccqqqq] (-1.5212803789277929,-0.5212803789277932) -- (-1.5266004736597412,-0.47339952634025895);
\draw [line width=1.2pt,color=ccqqqq] (-1.5212803789277929,-0.5212803789277932) -- (-1.4733995263402588,-0.5266004736597418);
\begin{scriptsize}
\draw [fill=black] (-2.,1.) circle (0.5pt);
\draw[color=black] (-1.9664629397455622,1.0486306344960463) node {$p$};
\draw [fill=black] (-0.3023175665969685,0.1598205174219592) circle (0.5pt);
\draw[color=black] (-0.2660954119405932,0.21098645413194422) node {$q$};
\draw [fill=black] (-2.,-1.) circle (0.5pt);
\draw[color=black] (-2.0065896070683933,-0.9175760643226843) node {$p'$};
\end{scriptsize}
\end{tikzpicture}
\caption{The blue line corresponds to $\sigma_{pq}$ and the red line corresponds to the image of $\sigma_{qp}$ under the isometry $f^{-1}$.}
\label{Fig:non-reversible}
\end{center}
\end{figure}

\begin{lemma}\label{Lem:non-reversible} 
Let $\lambda$ be the bicombing on \(\R^2\) given by linear segments. Then $\sigma\colon D(X) \to X$ defined by 
\begin{equation*}\label{eq:defbicombing}
(p,q,t)\mapsto 
\begin{cases}
(\pi \circ \lambda)\,(p, q, t), &\textrm{ if } p_x \leq q_x, \\
(f \circ \pi \circ \lambda)\,(f^{-1}(p),f^{-1}(q),t), &\textrm{ if } q_x \leq p_x . 
\end{cases}
\end{equation*}
is a non-reversible conical  bicombing on $X$ equipped with $\norm{\,\cdot\,}_{\infty}$. 
\end{lemma}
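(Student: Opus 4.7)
The proof consists of verifying three properties of $\sigma$: that it is a geodesic bicombing, that it satisfies the conical inequality, and that it fails reversibility. Throughout I would exploit two key facts: that $\pi$ preserves the $x$-coordinate and is $1$-Lipschitz with respect to $\|\cdot\|_\infty$ on $\R^2$ (the latter reduces to $\phi(x) = ||x|-1|$ being $1$-Lipschitz and $\pi$ being a $y$-clipping to $[-\phi(x), \phi(x)]$), and that $f$ is an isometry.

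For the geodesic bicombing property, well-definedness when $p_x = q_x$ follows since on the shared diamond $f^{-1}$ is the identity, while if $p,q$ lie on the antenna the linear segment is contained in the $1$-dimensional antenna where both cases produce the same result. The endpoints are correct because $\pi$ is the identity on $X_1 \cup X_2$ and $f \circ f^{-1} = \mathrm{id}$. For constant speed, the observation that $\sigma_{pq}(t)$ has $x$-coordinate $p_x + t(q_x - p_x)$ gives $|x_{\sigma(s)} - x_{\sigma(t)}| = |s-t|\,|q_x - p_x|$; combined with the $1$-Lipschitz upper bound $d(\sigma_{pq}(s), \sigma_{pq}(t)) \leq |s-t|\,d(p,q)$, this yields the desired equality whenever $d(p,q) = |q_x - p_x|$. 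The only other case, $|q_y - p_y| > |q_x - p_x|$, forces both endpoints into the diamond (on the antenna $y = -x - 1$ forces the two coordinates to differ by the same amount, and an antenna-diamond pairing still satisfies $|q_y - p_y| \leq |q_x - p_x|$ by a computation using $|x| + |y| \leq 1$); then the linear segment stays in the diamond, where $\pi$ is the identity, and $\sigma_{pq}$ is genuinely the linear geodesic.

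Conicality when both geodesics use the same case is immediate: $\sigma_{pq}(t)$ and $\sigma_{p'q'}(t)$ are both obtained from the linear bicombing $\lambda$ on $(\R^2, \|\cdot\|_\infty)$ by composing with the $1$-Lipschitz map $\pi$ (and the isometry $f$ in Case~2). The mixed case ($p_x \leq q_x$ while $q'_x \leq p'_x$) is the main obstacle. I would express $\sigma_{pq}(t) = (\bar f \circ \pi)(\gamma_1(t))$ and $\sigma_{p'q'}(t) = (\bar f \circ \pi)(\gamma_2(t))$ for linear lifts $\gamma_1 \subset Y_1$ and $\gamma_2 \subset Y_2$, and use that $\bar f \circ \pi$ is a $1$-Lipschitz retraction $Y_1 \cup Y_2 \to X_1$ (with $\pi$ extended to $\R^2$). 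When $p'$ and $q'$ both lie in the diamond or both on the antenna the Case~2 construction collapses to the Case~1 one (in the first case because $f^{-1}$ is the identity, in the second because the linear segment lives entirely on the $1$-dimensional antenna, where the reflection in the construction cancels), reducing the mixed case to the same-case scenario; when $p'$ lies in the diamond and $q'$ on the antenna, the desired bound reduces to $\|q - s(q')\|_\infty \leq d(q,q')$, which holds for $q$ in the diamond by a direct case analysis using $|x| + |y| \leq 1$. I expect the sub-case in which $p, q$ and $q'$ all lie on the antenna but $p'$ is in the diamond to be the trickiest; here a refined piecewise analysis of $y_{pq}(t)$ and $y_{p'q'}(t)$ (both piecewise linear in $t$) is needed, verifying the $y$-difference bound at each break point and using linearity in between.

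Finally, non-reversibility is witnessed by taking $p = (-2, 1)$ and $q = (1, 0)$: direct computation shows that $\sigma_{pq}$ (Case~1, staying in the closed upper half-plane via $\pi$) and $\sigma_{qp}$ (Case~2, factoring through the reflected $X_2$ and hence passing through the lower half-plane) satisfy $\sigma_{pq}(t) \neq \sigma_{qp}(1-t)$ for all $t$ in a non-trivial open sub-interval of $(0,1)$.
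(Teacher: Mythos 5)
Your proposal is correct and follows essentially the same route as the paper: both branches are conical geodesic bicombings obtained as $1$-Lipschitz retractions of linear geodesics, the conical inequality is reduced to the genuinely mixed case ($p$ on the antenna, $q,p'$ in the diamond, $q'$ on the antenna), and there one bounds via the $1$-Lipschitz map $\bar f\circ\pi$ together with the identities $\|p-f^{-1}(p')\|_\infty=\|p-p'\|_\infty$ and $\|q-f^{-1}(q')\|_\infty=\|q-q'\|_\infty$, which the paper obtains at once from $f^{-1}$ being an isometry that fixes the diamond pointwise, rather than by your direct coordinate computation. The only inefficiency is your ``trickiest'' sub-case with $p,q,q'$ all on the antenna: since both endpoints $p,q$ then lie on the antenna, the two defining formulas for $\sigma_{pq}$ coincide, so this sub-case reduces immediately to the same-case scenario by the very collapse you already noted, and no refined piecewise analysis of $y_{pq}$ and $y_{p'q'}$ is needed.
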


\begin{proof}
Both maps \(\sigma^{(1)} := \pi \circ \lambda\)  and  \(\sigma^{(2)} :=f \circ \pi \circ \lambda \circ (f^{-1}\times f^{-1} \times \Id_{[0,1]})\)
define bicombings on $X$. Thus, it follows that $\sigma\colon D(X)\to X$ is a bicombing as well. In the following we show that $\sigma$ is conical. As both maps $\sigma^{(1)}$ and $\sigma^{(2)}$ are conical bicombings with $\sigma^{(1)}_{pq} = \sigma^{(2)}_{pq}$ if $p_x$, $q_x \leq -1$ or $p_x$, $q_x \geq -1$, it remains to check the conical inequality~\eqref{Eq:conical} when 
\[
p_x, q'_x \leq -1 \,\text{ and }\,q_x, p'_x \geq -1 \quad \text{ or } \quad p'_x, q_x \leq -1 \,\text{ and }\, q'_x, p_x \geq -1.
\]
Suppose  $p_x$, $q'_x \leq -1$ and $q_x$, $p'_x \geq -1$. Since $\bar{f} \circ \pi $ is $1$-Lipschitz, we compute
\begin{align*}
\norm{\sigma_{pq}(t)-\sigma_{p^\prime q^\prime}(t)}_{\infty} &= \norm{(\bar{f} \circ \pi \circ \lambda)\,(p,q,t)-(\bar{f} \circ \pi \circ \lambda)\,(f^{-1}(p'),f^{-1}(q'),t)}_{\infty} \\[0.2em]
&\leq (1-t)\norm{p-f^{-1}(p')}_{\infty}+t\norm{q-f^{-1}(q')}_\infty
\end{align*}
for all $t \in [0,1]$. By our assumptions on the points \(p\), \(q\), \(p'\), \(q'\),  
\begin{align*}
\norm{p-f^{-1}(p')}_{\infty} &= \norm{p-p'}_\infty, \\ 
\norm{q-f^{-1}(q')}_{\infty} &= \norm{f^{-1}(q)-f^{-1}(q')}_{\infty}=\norm{q-q^\prime}_{\infty}. 
\end{align*}
Hence, \eqref{Eq:conical} follows. The other case is treated analogously. Thus, $\sigma$ is a conical bicombing on $X$. By construction, $\sigma$ is non-reversible; see Figure~\ref{Fig:non-reversible}.
\end{proof}

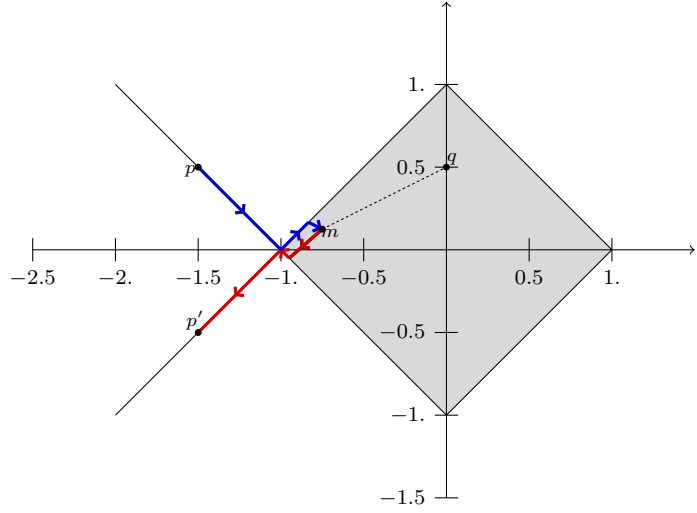
\begin{figure}
\begin{center}
\definecolor{ccqqqq}{rgb}{0.8,0.,0.}
\definecolor{qqqqcc}{rgb}{0.,0.,0.8}
\begin{tikzpicture}[scale=2.2][line cap=round,line join=round,>=triangle 45,x=1.0cm,y=1.0cm]
\draw[->,color=black] (-2.5,0.) -- (1.5,0.);
\foreach \x in {-2.5,-2.,-1.5,-1.,-0.5,0.5,1.}
\draw[shift={(\x,0)},color=black] (0pt,2pt) -- (0pt,-2pt) node[below] {\footnotesize $\x$};
\draw[->,color=black] (0.,-1.5) -- (0.,1.5);
\foreach \y in {-1.5,-1.,-0.5,0.5,1.}
\draw[shift={(0,\y)},color=black] (2pt,0pt) -- (-2pt,0pt) node[left] {\footnotesize $\y$};
\clip(-2.5,-1.5) rectangle (1.5,1.5);
\fill[line width=0.pt,fill=black,fill opacity=0.15] (0.,1.) -- (-1.,0.) -- (0.,-1.) -- (1.,0.) -- cycle;
\draw [line width=0.4pt] (-2.,1.)-- (-1.,0.);
\draw [line width=0.4pt] (0.,1.)-- (1.,0.);
\draw [line width=0.4pt] (1.,0.)-- (0.,-1.);
\draw [line width=0.4pt] (-1.,0.)-- (-2.,-1.);
\draw [line width=0.4pt] (-1.,0.)-- (0.,-1.);
\draw [line width=0.4pt] (0.,1.)-- (-1.,0.);
\draw [line width=0.4pt,dash pattern=on 1pt off 1pt] (-0.75,0.125)-- (0.,0.5);
\draw (-0.75,0.125)-- (-0.8333333333333334,0.16666666666666666);
\draw (-0.95,-0.05)-- (-1.,0.);
\draw [line width=1.2pt,color=ccqqqq] (-1.,0.)-- (-1.5,-0.5);
\draw [line width=1.2pt,color=ccqqqq] (-1.2714590229127505,-0.2714590229127505) -- (-1.2768237786409382,-0.22317622135906137);
\draw [line width=1.2pt,color=ccqqqq] (-1.2714590229127505,-0.2714590229127505) -- (-1.2231762213590616,-0.2768237786409382);
\draw [line width=1.2pt,color=ccqqqq] (-0.95,-0.05)-- (-1.,0.);
\draw [line width=1.2pt,color=ccqqqq] (-0.9964590229127503,-0.003540977087249497) -- (-0.9481762213590613,0.0018237786409382132);
\draw [line width=1.2pt,color=ccqqqq] (-0.9964590229127503,-0.003540977087249497) -- (-1.001823778640938,-0.0518237786409386);
\draw [line width=1.2pt] (-0.75,0.125)-- (-0.95,-0.05);
\draw [line width=1.2pt] (-0.8728389275354104,0.0175159384065156) -- (-0.8749800769918551,0.0660486594192633);
\draw [line width=1.2pt] (-0.8728389275354104,0.0175159384065156) -- (-0.8250199230081442,0.00895134058073656);
\draw [line width=1.2pt,color=ccqqqq] (-0.75,0.125)-- (-0.95,-0.05);
\draw [line width=1.2pt,color=ccqqqq] (-0.8728389275354104,0.0175159384065156) -- (-0.8749800769918551,0.0660486594192633);
\draw [line width=1.2pt,color=ccqqqq] (-0.8728389275354104,0.0175159384065156) -- (-0.8250199230081442,0.00895134058073656);
\draw [line width=1.2pt,color=qqqqcc] (-1.5,0.5)-- (-1.,0.);
\draw [line width=1.2pt,color=qqqqcc] (-1.2285409770872489,0.22854097708724938) -- (-1.276823778640938,0.22317622135906168);
\draw [line width=1.2pt,color=qqqqcc] (-1.2285409770872489,0.22854097708724938) -- (-1.2231762213590613,0.2768237786409385);
\draw [line width=1.2pt,color=qqqqcc] (-1.,0.)-- (-0.8333333333333334,0.16666666666666666);
\draw [line width=1.2pt,color=qqqqcc] (-0.8952076437539161,0.10479235624608386) -- (-0.8898428880257284,0.056509554692394756);
\draw [line width=1.2pt,color=qqqqcc] (-0.8952076437539161,0.10479235624608386) -- (-0.9434904453076052,0.11015711197427157);
\draw [line width=1.2pt,color=qqqqcc] (-0.8333333333333334,0.16666666666666666)-- (-0.75,0.125);
\draw [line width=1.2pt,color=qqqqcc] (-0.7645229111602532,0.13226145558012667) -- (-0.8086315138581743,0.11190363895031687);
\draw [line width=1.2pt,color=qqqqcc] (-0.7645229111602532,0.13226145558012667) -- (-0.7747018194751583,0.17976302771634972);
\begin{scriptsize}
\draw [fill=black] (-1.5,0.5) circle (0.5pt);
\draw[color=black] (-1.548413944424672,0.4786231079085647) node {$p$};
\draw [fill=black] (0.,0.5) circle (0.5pt);
\draw[color=black] (0.0347213401830625,0.5494342707983676) node {$q$};
\draw [fill=black] (-1.5,-0.5) circle (0.5pt);
\draw[color=black] (-1.5180663031861852,-0.4267481890396306) node {$p'$};
\draw [fill=black] (-0.75,0.125) circle (0.5pt);
\draw[color=black] (-0.7037379299534526,0.0992775924274773) node {$m$};
\end{scriptsize}
\end{tikzpicture}
\caption{The blue line corresponds to \(\tau_{pq}|_{[0,\frac{1}{2}]}\) and the red line corresponds to the image of \(\tau_{qp}|_{[\frac{1}{2},1]}\) under the isometry \(f^{-1}\). The point \(m\) is equal to \(\frac{1}{2}\left(\sigma_{pq}(\frac{1}{2})+\sigma_{qp}(\frac{1}{2})\right)\).}
\label{Fig:midpoint}
\end{center}
\end{figure}

Next, we use the conical bicombing from Lemma~\ref{Lem:non-reversible} to construct a non-reversible conical bicombing which has the midpoint property.

\begin{lemma}\label{lem:midpoint-property}
Let \(\sigma\colon D(X)\to X\) be defined as in Lemma~\ref{Lem:non-reversible}. Then the map $\tau\colon D(X)\to X$ defined by the assignment
\begin{equation*}
(p,q,t)\mapsto
\begin{cases}
\sigma\left(p, \frac{1}{2}\left(\sigma(p,q,\frac{1}{2})+\sigma(q,p,\frac{1}{2})\right), 2t\right), &\textrm{if } t\in [0,\frac{1}{2}], \\
\sigma\left(\frac{1}{2}\left(\sigma(p,q,\frac{1}{2})+\sigma(q,p,\frac{1}{2})\right), q, 2t-1\right), &\textrm{if } t\in [\frac{1}{2},1],
\end{cases}
\end{equation*}
is a non-reversible conical bicombing on $(X, \norm{\,\cdot\,}_\infty)$ that has the midpoint property. 
\end{lemma}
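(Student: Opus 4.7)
The plan is to verify, for the map $\tau$, that it is well-defined (i.e., $m_{pq}:=\tfrac{1}{2}(\sigma_{pq}(\tfrac{1}{2})+\sigma_{qp}(\tfrac{1}{2}))\in X_1$), that each $\tau_{pq}$ is a constant-speed geodesic from $p$ to $q$, the midpoint property, the conical inequality, and failure of reversibility. Well-definedness I would handle by observing that the maps $\lambda$, $\pi$ and $f^{\pm 1}$ all preserve $x$-coordinates, so both $\sigma_{pq}(\tfrac{1}{2})$ and $\sigma_{qp}(\tfrac{1}{2})$ share the common $x$-coordinate $x_0=\tfrac{1}{2}(p_x+q_x)$. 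The vertical slice $X_1\cap\{x=x_0\}$ is convex---either the interval $|y|\leq 1-|x_0|$ when $x_0\in[-1,1]$, or the single point $(x_0,-x_0-1)$ when $x_0\in[-2,-1)$---so the vector midpoint $m_{pq}$ lies inside it.

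To see $\tau_{pq}$ is a constant-speed geodesic from $p$ to $q$, convexity of $\norm{\cdot}_\infty$ yields
\[
d(p,m_{pq})\leq \tfrac{1}{2}\bigl(d(p,\sigma_{pq}(\tfrac{1}{2}))+d(p,\sigma_{qp}(\tfrac{1}{2}))\bigr)=\tfrac{1}{2}d(p,q),
\]
and symmetrically $d(m_{pq},q)\leq \tfrac{1}{2}d(p,q)$; the triangle inequality then forces both to be equalities, so each half of $\tau_{pq}$ is a $\sigma$-geodesic reparameterized at speed $d(p,q)$. The midpoint property is immediate, since $m_{pq}=m_{qp}$.

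For the conical inequality, fix $p,q,p',q'\in X_1$ and $t\in[0,\tfrac{1}{2}]$. Applying the conical property of $\sigma$ to the pairs $(p,m_{pq})$ and $(p',m_{p'q'})$ at parameter $2t$ gives
\[
d(\tau_{pq}(t),\tau_{p'q'}(t))\leq(1-2t)\,d(p,p')+2t\,d(m_{pq},m_{p'q'}),
\]
and combining convexity of the norm with two further applications of the conical property of $\sigma$ at $t=\tfrac{1}{2}$ yields $d(m_{pq},m_{p'q'})\leq\tfrac{1}{2}(d(p,p')+d(q,q'))$. Substituting produces the desired bound $(1-t)d(p,p')+t\,d(q,q')$ with equality; the range $t\in[\tfrac{1}{2},1]$ follows by the symmetric argument on the second half.

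Finally, for non-reversibility I would use the pair $p=(-\tfrac{3}{2},\tfrac{1}{2})$, $q=(0,\tfrac{1}{2})$ shown in Figure \ref{Fig:midpoint}, for which a direct calculation gives $m_{pq}=(-\tfrac{3}{4},\tfrac{1}{8})$. The first-case formula for $\sigma$ keeps $\sigma_{p,m_{pq}}$ on or above the $x$-axis, while the second-case formula for $\sigma_{m_{pq},p}$ invokes the $f$-flip and therefore passes through the lower half-plane; at $t=\tfrac{3}{8}$ one finds $\tau_{pq}(\tfrac{3}{8})=(-\tfrac{15}{16},\tfrac{1}{16})$ whereas $\tau_{qp}(\tfrac{5}{8})=(-\tfrac{15}{16},-\tfrac{1}{32})$. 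The only delicate step in the whole proof is the well-definedness: although $X_1$ is not convex as a subset of $\mathbb{R}^2$, each of its vertical slices is, which is exactly what is needed for the vector midpoint to remain in $X_1$.
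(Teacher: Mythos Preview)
Your proof is correct and follows the same approach as the paper, only with considerably more detail: the paper simply states that ``it is readily verified that $\tau$ is a conical geodesic bicombing with the midpoint property'' and then exhibits the counterexample $p=(-\tfrac{3}{2},\tfrac{1}{2})$, $q=(0,\tfrac{1}{2})$, computing $\tau(p,q,\tfrac{5}{12})=(-\tfrac{7}{8},\tfrac{1}{8})\neq(-\tfrac{7}{8},\tfrac{1}{48})=\tau(q,p,\tfrac{7}{12})$. Your well-definedness argument via convexity of vertical slices and your verification of the conical inequality are exactly the details the paper suppresses; your counterexample uses a different parameter value ($t=\tfrac{3}{8}$ instead of $t=\tfrac{5}{12}$), but your computations $\tau_{pq}(\tfrac{3}{8})=(-\tfrac{15}{16},\tfrac{1}{16})$ and $\tau_{qp}(\tfrac{5}{8})=(-\tfrac{15}{16},-\tfrac{1}{32})$ are correct. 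One small wording issue: the phrase ``with equality'' after the conical bound is out of place and should be deleted.
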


\begin{proof}
It is readily verified that $\tau$ is a conical  bicombing with the midpoint property. To see that $\tau$ is non-reversible, take for instance $p:=(-\frac{3}{2},\frac{1}{2})$, $q:=(0,\frac{1}{2})$ and observe that \[\tau(p,q,\tfrac{5}{12}) = (-\tfrac{7}{8}, \tfrac{1}{8}) \neq (-\tfrac{7}{8},\tfrac{1}{48}) = \tau(q,p,\tfrac{7}{12});\] compare Figure~\ref{Fig:midpoint}.
\end{proof}

To prove Proposition~\ref{Prop:reversible} we need the following midpoint construction:

\begin{lemma}
	Let $X$ be a complete metric space. If $\sigma\colon D(X)\to X$ is a conical  bicombing, then there is a midpoint map $m \colon X \times X \to X$ with the following properties: For all $x,y, \bar{x}, \bar{y} \in X$ we have
\begin{enumerate}[(i)]
	\item $m(x,y)=m(y,x)$,
	\item $d(x,m(x,y)) = d(y,m(x,y)) = \frac{1}{2}d(x,y)$,
	\item $d(m(x,y),m(\bar{x},\bar{y})) \leq \frac{1}{2}d(x,\bar{x}) + \frac{1}{2}d(y,\bar{y})$.
\end{enumerate}
\end{lemma}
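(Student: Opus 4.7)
The plan is to build \(m\) by iterated symmetrization, using completeness to extract a limit. The natural candidate \(m_{0}(x,y):=\sigma(x,y,\tfrac12)\) already satisfies (ii) tautologically and (iii) by applying the conical inequality at \(t=\tfrac12\). What it generally lacks is (i), precisely because \(\sigma\) may fail to be reversible, as constructed above. I would then define the iteration
\[
  m_{n+1}(x,y):=\sigma\!\left(m_n(x,y),\,m_n(y,x),\,\tfrac12\right)
\]
and set \(m(x,y):=\lim_{n\to\infty} m_n(x,y)\) once convergence is established.

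Two invariants are maintained by induction. First, \(m_n(x,y)\) is always a metric midpoint of \(x,y\): if \(a,b\) are both metric midpoints of \(x,y\), then \(d(x,\sigma(a,b,\tfrac12))\le\tfrac12 d(x,a)+\tfrac12 d(x,b)=\tfrac12 d(x,y)\), and the reverse inequality follows from the triangle inequality, forcing equality. This gives (ii) for every \(m_n\), and then for \(m\) by continuity of \(d\). Second, (iii) propagates: writing \(p=m_n(x,y)\), \(q=m_n(y,x)\), \(\bar p=m_n(\bar x,\bar y)\), \(\bar q=m_n(\bar y,\bar x)\), applying conicality twice yields
\[
  d(m_{n+1}(x,y),m_{n+1}(\bar x,\bar y))\le\tfrac12 d(p,\bar p)+\tfrac12 d(q,\bar q)\le\tfrac12 d(x,\bar x)+\tfrac12 d(y,\bar y),
\]
and the estimate passes to the limit.

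The real work, and the main obstacle, is to show that \(\{m_n(x,y)\}_n\) is Cauchy. Since \(m_{n+1}(x,y)\) is the \(\sigma\)-midpoint of \(m_n(x,y)\) and \(m_n(y,x)\), we have
\[
  d(m_n(x,y),m_{n+1}(x,y))=\tfrac12\, e_n,\qquad e_n:=d(m_n(x,y),m_n(y,x)),
\]
so Cauchy reduces to summability of \(e_n\). Conicality gives only \(e_{n+1}\le e_n\), which is not enough. My plan is to sharpen this by exploiting that \(m_n(x,y)\) and \(m_n(y,x)\) are forced to live in the (closed, bounded) set of metric midpoints of \(x\) and \(y\), and by iterating the conical estimate one more step: applying it to the pairs \((m_n(x,y),m_n(y,x))\) and \((m_n(y,x),m_n(x,y))\) combined with the midpoint identity \(d(m_n(x,y),m_{n+1}(y,x))=\tfrac12 e_n\) should produce a geometric decay \(e_{n+1}\le \lambda e_n\) with \(\lambda<1\), at least after a finite number of steps. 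Completeness of \(X\) then delivers \(m(x,y)=\lim m_n(x,y)\in X\).

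Finally, property (i) follows since \(e_n\to 0\) forces \(\lim m_n(x,y)=\lim m_n(y,x)\). I expect the decay estimate for \(e_n\) to be the delicate point, and it is the place where the conical condition (as opposed to mere geodesicity) and completeness genuinely enter; every other step is either a routine induction or a continuity argument.
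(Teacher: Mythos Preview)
Your construction is exactly the paper's: with \(a:=m_n(x,y)\) and \(b:=m_n(y,x)\) your iteration reads \(m_{n+1}(x,y)=\sigma(a,b,\tfrac12)\), which is the paper's sequence \(x_{n+1}=\sigma(x_n,y_n,\tfrac12)\), \(y_{n+1}=\sigma(y_n,x_n,\tfrac12)\) up to a shift of index. Your treatment of (ii) and (iii) and of (i) in the limit is also the same.

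The only place you hesitate---the decay of \(e_n\)---is not delicate at all; you already have the right identity in hand but are applying conicality to the wrong comparison. Comparing \(\sigma_{ab}(\tfrac12)\) with \(\sigma_{ba}(\tfrac12)\) indeed gives only \(e_{n+1}\le e_n\). Instead, compare \(\sigma_{ab}(\tfrac12)\) with the \emph{constant} geodesic at the point \(c:=m_{n+1}(y,x)=\sigma(b,a,\tfrac12)\). The conical inequality at \(t=\tfrac12\) then yields
\[
e_{n+1}=d\bigl(\sigma(a,b,\tfrac12),c\bigr)\le \tfrac12\,d(a,c)+\tfrac12\,d(b,c)=\tfrac12\cdot\tfrac12 e_n+\tfrac12\cdot\tfrac12 e_n=\tfrac12\,e_n,
\]
using precisely the midpoint identity \(d(a,c)=d(b,c)=\tfrac12 e_n\) you wrote down. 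So \(\lambda=\tfrac12\) from the very first step, \(d(m_n,m_{n+1})=\tfrac12 e_n\le 2^{-(n+1)}d(x,y)\), and the sequence is Cauchy. No further iteration or ``after finitely many steps'' argument is needed; this is exactly the paper's computation.
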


\begin{proof}
	Let $x,y \in X$. Set $x_0 := x$, $y_0 := y$ and define recursively $x_{n+1} := \sigma(x_n,y_n,\frac{1}{2})$, $y_{n+1}:= \sigma(y_n,x_n,\frac{1}{2})$.
	We have
\begin{align*}
	d(x_{n+1},y_{n+1}) &= d(\sigma(x_n,y_n,\tfrac{1}{2}),y_{n+1}) \\
	&\leq \tfrac{1}{2} d(x_n, y_{n+1}) + \tfrac{1}{2} d(y_n, y_{n+1}) = \tfrac{1}{2} d(x_n,y_n),
\end{align*}
	and therefore $d(x_n,y_n) \leq \frac{1}{2^n} d(x,y)$, $d(x_n,x_{n-1}) \leq \frac{1}{2^n} d(x,y)$. Hence the sequences $(x_n)_{n\geq 0}$, $(y_n)_{n\geq 0}$ are Cauchy and converge to some common limit point $m(x,y)$. By the construction, we clearly have $(i)$. To prove $(ii)$ we claim that 
	\[
	d(x,x_n),\,d(x,y_n),\,d(y,x_n),\,d(y,y_n) \leq \frac{1}{2} d(x,y)
	\] 
for all $n \geq 1$. This follows by induction since \[d(x,x_{n+1}) \leq \frac{1}{2}d(x,x_n) + \frac{1}{2}d(x,y_n) \leq \frac{1}{2} d(x,y)\] and similar for all other distances.
	It remains to show $(iii)$. If we repeat the construction for $\bar{x},\bar{y} \in X$ we get some sequences $(\bar{x}_n)_{n\geq 0}$, $(\bar{y}_n)_{n\geq 0}$ with limit point $m(\bar{x},\bar{y})$. We now prove by induction that $d(x_n,\bar{x}_n), d(y_n,\bar{y}_n) \leq \frac{1}{2}d(x,\bar{x}) + \frac{1}{2}d(y,\bar{y})$ for all $n \geq 1$. Indeed, we have
	\begin{align*}
		d(x_{n+1},\bar{x}_{n+1}) &= d(\sigma(x_n,y_n,\tfrac{1}{2}),\sigma(\bar{x}_n,\bar{y}_n,\tfrac{1}{2})) \\
		&\leq \tfrac{1}{2}d(x_n, \bar{x}_n) + \tfrac{1}{2}d (y_n, \bar{y}_n) \leq \tfrac{1}{2}d(x,\bar{x}) + \tfrac{1}{2}d(y,\bar{y}),
	\end{align*}
	and similarly $d(y_{n+1},\bar{y}_{n+1}) \leq \tfrac{1}{2}d(x,\bar{x}) + \tfrac{1}{2}d(y,\bar{y})$. Hence, statement $(iii)$ follows by taking the limit \(n\to \infty\).  
\end{proof}

\begin{proof}[Proof of Proposition~\ref{Prop:reversible}.]
	We define a new bicombing $\tau \colon D(X) \to X$ by
	\begin{align}
		\tau (x,y,t) := m( \sigma(x,y,t), \sigma(y,x,1-t)).
	\end{align}
	For two points $x,y \in X$ this defines a geodesic from $x$ to $y$, since for $s,t \in [0,1]$ we have
	\begin{align*}
		d(\tau(x,y,t),\tau(x,y,s)) &= d(m(\sigma(x,y,t),\sigma(y,x,1-t)),m(\sigma(x,y,s),\sigma(y,x,1-s)) \\
		&\leq \tfrac{1}{2} d(\sigma(x,y,t),\sigma(x,y,s)) + \tfrac{1}{2} d(\sigma(y,x,1-t),\sigma(y,x,1-s)) \\
		&= |s-t| d(x,y).
	\end{align*}
	Moreover, the conical inequality holds, as we have
	\begin{align*}
		d(\tau(x,y,t),\tau(\bar{x},\bar{y},t)) &= d(m(\sigma(x,y,t),\sigma(y,x,1-t)),m(\sigma(\bar{x},\bar{y},t),\sigma(\bar{y},\bar{x},1-t))) \\
		&\leq \tfrac{1}{2} d(\sigma(x,y,t),\sigma(\bar{x},\bar{y},t)) + \tfrac{1}{2} d(\sigma(y,x,1-t),\sigma(\bar{y},\bar{x},1-t)) \\
		&\leq (1-t) d(x, \bar{x}) + t d(y,\bar{y}),
	\end{align*}
	for all $x$, $y$, $\bar{x}$, $\bar{y} \in X$ and all $t \in [0,1]$.
\end{proof}


\section{Local behavior of conical  bicombings }\label{Sec:uniqueness}


The goal of this section is to establish the following rigidity result. 

\begin{theorem}\label{Thm:linear}
Let $V$ be a normed vector space. Suppose that $A\subset V$ admits a conical  bicombing $\sigma\colon D(A)\to A$ and let $p, q\in A$. If there are extreme points $e_1, \ldots, e_n $ of $B_1$ and a tuple $(\lambda_1, \ldots, \lambda_n)\in [0,1]^n$ with $\sum_{k=1}^n \lambda_k=1$ such that 
\begin{align}
&\frac{p-q}{2}=\frac{\norm{p-q}}{2}\sum_{k=1}^n\lambda_k e_k  \textrm{ and } \label{Eq:linear1}\\
&\frac{p+q}{2}+\frac{\norm{p-q}}{2}\Big\{ \sum_{k=1}^{n} (-1)^{\epsilon_k}\lambda_k e_k : (\epsilon_1, \ldots, \epsilon_{n})\in \{ 0,1\}^{n}\Big\}\subset A, \label{Eq:linear2} 
\end{align}
then $\sigma(p, q, t)=(1-t)p+tq$ for all $t\in [0,1]$.  
\end{theorem}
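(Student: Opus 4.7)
\emph{Proof plan.} The argument proceeds by induction on the number $n$ of extreme points appearing in the decomposition, using the extreme-point rigidity of $B_1$ at each stage. Throughout, write $L := \|p-q\|$, $L(t) := (1-t)p + tq$, and $\delta(t) := \sigma(p,q,t) - L(t)$, and denote by $V_\epsilon := \tfrac{p+q}{2} + \tfrac{L}{2}\sum_{k}(-1)^{\epsilon_k}\lambda_k e_k$ the vertices provided by \eqref{Eq:linear2}.

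\emph{Base case $n=1$.} Here $p - q = Le_1$ with $e_1$ extreme. The constant-speed conditions $\|\sigma(p,q,t)-p\| = tL$ and $\|\sigma(p,q,t)-q\| = (1-t)L$ rewrite as $\|tLe_1 - \delta(t)\| = tL$ and $\|(1-t)Le_1 + \delta(t)\| = (1-t)L$. Summing these two vectors yields $Le_1$ with sum-of-norms equal to $L$, so the triangle inequality is saturated and
\[
e_1 \;=\; t\cdot\frac{tLe_1 - \delta(t)}{tL} \;+\; (1-t)\cdot\frac{(1-t)Le_1 + \delta(t)}{(1-t)L}
\]
displays $e_1$ as a convex combination of two elements of $B_1$; extremality forces both summands to equal $e_1$, hence $\delta(t)=0$.

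\emph{Inductive step $n\ge 2$.} After discarding zero-weight extreme points, assume $\lambda_k > 0$ for every $k$. Fix any index $k$ and set $p' := p - L\lambda_k e_k$ and $q' := q + L\lambda_k e_k$; these coincide with vertices $V_\epsilon$ (for $\epsilon$ having a single $1$ in position $k$, and for its complement) and thus lie in $A$. The key claim is that each of $(p',q)$ and $(p,q')$ satisfies the hypothesis of the theorem with only $n-1$ extreme points. The normalization $\|\sum_j \lambda_j e_j\| = 1$ (equivalent to $\|p-q\|=L$) combined with the triangle inequality forces $\|\sum_{j\neq k}\lambda_j e_j\| = 1-\lambda_k$, so writing $\mu_j := \lambda_j/(1-\lambda_k)$ produces the required convex combination of extreme points with unit norm. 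A direct computation identifies the $2^{n-1}$ reduced vertices of $(p',q)$ with the slice $\{V_\epsilon : \epsilon_k = 1\} \subset A$, and likewise the reduced vertices of $(p,q')$ with $\{V_\epsilon : \epsilon_k = 0\} \subset A$. The inductive hypothesis then gives
\[
\sigma(p',q,t) \;=\; L(t) - (1-t)L\lambda_k e_k, \qquad \sigma(p,q',t) \;=\; L(t) + tL\lambda_k e_k.
\]
Applying the conical inequality to $(p,q)$ against each of these reduced pairs at parameter $t$ and substituting, one obtains
\[
\|\delta(t) + (1-t)L\lambda_k e_k\| \;\le\; (1-t)L\lambda_k, \qquad \|\delta(t) - tL\lambda_k e_k\| \;\le\; tL\lambda_k.
\]
The two vectors $tL\lambda_k e_k - \delta(t)$ and $(1-t)L\lambda_k e_k + \delta(t)$ sum to $L\lambda_k e_k$ of norm $L\lambda_k$, so both bounds are equalities, and the same convex-combination trick as in the base case—now using extremality of $e_k$—forces $\delta(t) = 0$.

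\emph{Main obstacle.} The only delicate point is the verification that the reduced hypothesis really holds for $(p',q)$ and $(p,q')$: specifically, the identity $\|\sum_{j\neq k}\lambda_j e_j\| = 1-\lambda_k$, which unpacks the unit-norm normalization implicit in the theorem, and the identification of the reduced $2^{n-1}$ vertex set with a "slice" of the original $2^n$ vertices. Once these compatibilities are established, the induction collapses cleanly by two invocations of the same extreme-point convexity argument used in the base case.
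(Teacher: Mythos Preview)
Your proof is correct and follows the same inductive strategy as the paper: peel off one extreme point, apply the induction hypothesis to two auxiliary pairs sharing an endpoint with $(p,q)$, then combine the conical inequality with extremality of the peeled-off $e_k$ to force $\delta(t)=0$. The paper packages the base case and the final squeezing step as separate lemmas (the $M^{(t)}$-set computation in Lemma~\ref{Lem:extremepoint} and the conical comparison in Lemma~\ref{Lem:linear}) and first translates the midpoint to the origin, whereas you carry out the identical computation inline; after that translation your pairs $(p',q)$ and $(p,q')$ coincide exactly with the paper's $(2z-p',-p')$ and $(p',p'-2z)$.
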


Throughout this section we use the following notation:
\begin{align*}
B_{r}(p)&:=\{ q\in V : \, \norm{p-q}\leq r\},&  S_{r}(p)&:=\{ q\in V : \, \norm{p-q}=r\},
\end{align*}
and we abbreviate \(B_r:=B_r(0)\) and \(S_r:=S_r(0)\). Theorem~\ref{Thm:linearInterior} is a direct consequence of Theorem~\ref{Thm:linear}:

\begin{proof}[Proof of Theorem~\ref{Thm:linearInterior}.]
Let \(p,q\in  B_{r}(p_0)\). We have $\frac{p+q}{2} \in B_r(p_0)$ and $\frac{\|p-q\|}{2} \leq r$, so \[B_{\frac{\|p-q\|}{2}}(\frac{p+q}{2})\subset A.\] 
Then by Theorem~\ref{Thm:linear} and a straightforward limit argument, it follows that $\sigma(p,q,t)=(1-t)p+tq$ for all \(t\in [0,1]\), as desired. 
\end{proof}

We prove Theorem~\ref{Thm:linear} by induction on the number of extreme points. For this induction, we need some preparatory lemmas and definitions. For each $t\in [0,1]$ we set
\begin{equation*}
M^{(t)}(p,q):=\{ z\in V : \,\norm{z-p}=t\norm{p-q}, \,\norm{z-q}=(1-t)\norm{p-q} \} .
\end{equation*}
Clearly, \(\sigma(p,q,t)\in M^{(t)}(p,q)\) for every  bicombing \(\sigma\). Thus, if \(M^{(t)}(p,q)\) is a singleton, then \(\sigma(p,q,t)=\lambda(p,q,t)\). Here, $\lambda\colon D(V)\to V$  is the linear bicombing defined by \((p,q,t)\mapsto (1-t)p+tq \). Our first lemma gives a sufficient condition for the set \(M^{(t)}(p,q)\) to be a singleton. 

\begin{lemma}\label{Lem:extremepoint}
Let $V$ be a normed vector space and \(p\in V\). If \(p\) is an extreme point of $B_{r}$, for \(r=\norm{p}\), then $ M^{(t)}(p,-p)=\{(1-2t)p\}$ for all $t\in [0,1]$.
\end{lemma}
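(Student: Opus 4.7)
The goal is to show that any $z\in V$ satisfying $\|z-p\|=2t\|p\|$ and $\|z+p\|=2(1-t)\|p\|$ must equal $(1-2t)p$; here I used that $\|p-(-p)\|=2\|p\|$. First I would dispose of the trivial cases. If $p=0$, then $\|p\|=0$ forces $z=0=(1-2t)p$. Otherwise, by homogeneity I may rescale and assume $\|p\|=1$, so that $p$ is an extreme point of the unit ball $B_1$. The endpoints $t=0$ and $t=1$ are also immediate from the defining equalities, since they force $z=p$ and $z=-p$ respectively.

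The main case is $t\in(0,1)$. Fix $z\in M^{(t)}(p,-p)$ and set
\[
a:=\frac{p-z}{2t},\qquad b:=\frac{z+p}{2(1-t)}.
\]
Both of these are unit vectors by the two norm conditions defining $M^{(t)}(p,-p)$, so $-a$ and $b$ both lie in $B_1=B_{\|p\|}$. A direct computation gives
\[
(1-t)b+t(-a)=\frac{z+p}{2}+\frac{z-p}{2}\cdot(-1)\cdot\tfrac{t}{t}=\frac{z+p}{2}-\frac{z-p}{2}=p.
\]
Thus $p$ is written as a proper convex combination (since $t\in(0,1)$) of two points of $B_1$. Because $p$ is an extreme point of $B_1$, this forces $b=-a=p$. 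Unpacking $b=p$ yields $z+p=2(1-t)p$, hence $z=(1-2t)p$, which is the required conclusion.

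\textbf{Possible obstacle.} There is essentially no obstacle here: once one writes $p$ as the right convex combination, the extreme-point hypothesis does all the work. The only mildly delicate point is making sure the coefficients $1/(2t)$ and $1/(2(1-t))$ are well-defined, which is exactly why the boundary cases $t\in\{0,1\}$ are handled separately at the start. It is worth noting that the proof uses only the hypothesis that $p$ is extreme, not any property of a putative bicombing on $V$; this fits the role of the lemma as a pointwise uniqueness statement for the midpoint-type set $M^{(t)}$, which is then fed into the inductive argument for Theorem~\ref{Thm:linear}.
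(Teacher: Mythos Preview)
Your approach is essentially the paper's, stripped of the auxiliary set $E^{(t)}$: the paper's points $p'$ and $q$ are precisely your $a$ and $b$, and both arguments finish by exhibiting $p$ as a proper convex combination of two unit vectors and invoking extremality.

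There is, however, a sign slip in your displayed computation. With your definitions one has
\[
(1-t)b + t(-a)=\frac{z+p}{2}+\frac{z-p}{2}=z,
\]
not $p$; the intermediate ``$\cdot(-1)\cdot\tfrac{t}{t}$'' step is inserting a sign that is not there. The correct identity uses $a$, not $-a$: from $p-z=2ta$ and $z+p=2(1-t)b$ one adds to obtain
\[
p = t\,a + (1-t)\,b.
\]
Since $\|a\|=\|b\|=1$ and $t\in(0,1)$, extremality of $p$ in $B_1$ forces $a=b=p$, and then $b=p$ gives $z=(1-2t)p$ exactly as you intended. With this correction the proof is complete and slightly more direct than the paper's contrapositive formulation.
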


\begin{proof}
We abbreviate \(r:=\norm{p}\). By construction, 
\begin{equation*}
M^{(t)}(p,-p)=\left(S_{2tr}+p\right)\cap \left(S_{(1-t)2r}-p \right);
\end{equation*}
hence,
\begin{equation}\label{Eq:relationME}
\frac{1}{2t}\big(p-M^{(t)}(p,-p) \big)= S_{r}\cap \big(\frac{1}{t}p-\frac{1-t}{t}S_{r} \big),
\end{equation}
provided that $t\in (0,1]$. For each \(t\in (0,1]\) and \(p\in V\), we define
\begin{equation*}
E^{(t)}(p)\mapsto S_{r}\cap \big(\frac{1}{t}p-\frac{1-t}{t}S_{r} \big),  
\end{equation*} 
By the use of \eqref{Eq:relationME}, we find that $ M^{(t)}(p,-p)=\{(1-2t)p\}$ if and only if  \(E^{(t)}(p)=\{p\}\). Thus, to finish the proof we need to show that if $p$ is an extreme point of $B_{r}$, then $E^{(t)}(p)=\{p\}$ for all $t\in(0,1)$. We show the contraposition of this statement. Suppose there are $t\in(0,1)$ and $p' \in E^{(t)}(p)$ with $p'\neq p$. As $p'\in E^{(t)}(p)$, it follows  $p'\in S_{r}$ and there is $q\in S_{r}$ such that $p'= \frac{1}{t}p-\frac{1-t}{t}q$. We observe that $q \neq p$ and
\begin{equation*}
(1-t)q + tp' = (1-t)q+t\big(\frac{1}{t}p-\frac{1-t}{t}q\big)=p.
\end{equation*}
Hence, $p$ is not an extreme point of $B_{r}$, as desired. The lemma follows. 
\end{proof}

Lemma~\ref{Lem:extremepoint} will serve as base case for the induction in the proof of Theorem~\ref{Thm:linear}.
The following lemma is the key component of the inductive step. 

\begin{lemma}\label{Lem:linear}
Let $V$ be a normed vector space, and let $A\subset V$ be a subset admitting a conical  bicombing $\sigma$. Let $p\in A$ such that $-p\in A$. If there is $z\in V$ such that the points $2z-p$ and $p-2z$ are contained in $A$, $\sigma(p, p-2z, \cdot)=\lambda(p, p-2z, \cdot)$ and $\sigma(2z-p,-p, \cdot)=\lambda(2z-p,-p, \cdot)$, then 
\begin{equation*}
\sigma(p,-p,t)\in \big((1-2t)z+M^{(t)}(p-z, z-p)\big)
\end{equation*}
for all $t\in [0,1]$. 
\end{lemma}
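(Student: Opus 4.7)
The plan is to translate the claim $\sigma(p,-p,t) \in (1-2t)z + M^{(t)}(p-z, z-p)$ into two distance equalities and then obtain these equalities by combining the conical property of $\sigma$ with the triangle inequality, invoking the linearity hypotheses about the two auxiliary geodesics.

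First, I would unpack the definition of $M^{(t)}(p-z, z-p)$. A point $x \in V$ lies in $(1-2t)z + M^{(t)}(p-z, z-p)$ if and only if
\begin{align*}
\|x - (p - 2tz)\| &= 2t\,\|p - z\|, \\
\|x - (2(1-t)z - p)\| &= 2(1-t)\,\|p - z\|,
\end{align*}
obtained by shifting by $(1-2t)z$ and using $\|(p-z)-(z-p)\| = 2\|p-z\|$. The key algebraic observation is then that these two target points are already known to lie on known linear geodesics: $p - 2tz = (1-t)p + t(p-2z) = \lambda(p, p-2z, t)$, and $2(1-t)z - p = (1-t)(2z-p) + t(-p) = \lambda(2z-p, -p, t)$.

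Next, I would apply the conical inequality~\eqref{Eq:conical} twice, pairing $\sigma(p,-p,\cdot)$ with each of the two geodesics whose linearity is assumed. Using $\sigma(p, p-2z, t) = p - 2tz$ and $\sigma(2z-p, -p, t) = 2(1-t)z - p$ together with~\eqref{Eq:conical} yields
\begin{align*}
\|\sigma(p,-p,t) - (p-2tz)\| &\leq (1-t)\|p-p\| + t\|(-p)-(p-2z)\| = 2t\,\|p-z\|, \\
\|\sigma(p,-p,t) - (2(1-t)z - p)\| &\leq (1-t)\|p-(2z-p)\| + t\|(-p)-(-p)\| = 2(1-t)\,\|p-z\|.
\end{align*}

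Finally, I would invoke the triangle inequality to force these bounds to be equalities. Indeed, since $(p-2tz) - (2(1-t)z - p) = 2(p-z)$, the two target points are at distance exactly $2\|p-z\|$, so
\[
2\|p-z\| \leq \|\sigma(p,-p,t)-(p-2tz)\| + \|\sigma(p,-p,t)-(2(1-t)z-p)\| \leq 2t\|p-z\| + 2(1-t)\|p-z\| = 2\|p-z\|.
\]
Equality throughout forces each of the two conical estimates above to be tight, which is precisely the pair of identities characterizing membership of $\sigma(p,-p,t)$ in $(1-2t)z + M^{(t)}(p-z, z-p)$. The only non-routine step is spotting the two algebraic identifications $p-2tz = \lambda(p,p-2z,t)$ and $2(1-t)z - p = \lambda(2z-p, -p, t)$; once these are in place, the conclusion is a two-line squeeze.
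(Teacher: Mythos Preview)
Your proof is correct and follows essentially the same route as the paper: both apply the conical inequality to the pairs $(\sigma_{p,-p},\sigma_{p,p-2z})$ and $(\sigma_{p,-p},\sigma_{2z-p,-p})$, then use the triangle-inequality squeeze coming from $\|\lambda(p,p-2z,t)-\lambda(2z-p,-p,t)\|=2\|p-z\|$ to force equality. The only cosmetic difference is that the paper phrases the conclusion as $\sigma(p,-p,t)\in M^{(t)}(\lambda(p,p-2z,t),\lambda(2z-p,-p,t))$ and then invokes the translation identity $M^{(t)}(u+h,v+h)=h+M^{(t)}(u,v)$, whereas you unpack the shifted set directly.
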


\begin{proof}
Let $t\in [0,1]$. Then, using that \(\sigma\) is conical, we compute
\begin{align*}
\norm{\sigma(p,-p,t)-\lambda(p, p-2z, t) } &\leq 2t\norm{p-z} \\
\norm{\sigma(p,-p,t)-\lambda(2z-p,-p, t)} &\leq 2(1-t)\norm{p-z}. 
\end{align*}
Note that $\norm{\lambda(p, p-2z, t)-\lambda(2z-p,-p, t)}=2\norm{p-z}$. Therefore, it follows that 
\begin{equation*}
\sigma(p,-p,t)\in M^{(t)}\left(\lambda(p, p-2z, t), \lambda(2z-p,-p, t)\right).
\end{equation*} 
Clearly, $M^{(t)}(u+h,v+h)=h+M^{(t)}(u,v)$ for all $t\in [0,1]$ and all $u,v,h\in V$.
Consequently, we obtain 
\begin{equation*}
M^{(t)}\left(\lambda(p, p-2z, t), \lambda(2z-p,-p, t)\right)=(1-2t)z+M^{(t)}\left(p-z, z-p\right),
\end{equation*} 
as desired.
\end{proof}

\begin{proof}[Proof of Theorem~\ref{Thm:linear}]
It suffices to consider the case \(q=-p\). 
We proceed by induction on $n\geq 1$. If $n=1$, then Lemma~\ref{Lem:extremepoint} tells us that 
\begin{equation*}
\sigma\left(p, -p, t\right)=(1-2t)p
\end{equation*}
for all $t\in[0,1]$, as desired.
 
Suppose now $n>1$ and the statement holds for $n-1$. We may assume that $\lambda_1\in (0,1)$. 
We define $(\lambda^\prime_1, \ldots, \lambda_{n-1}^\prime):=\frac{1}{1-\lambda_1}(\lambda_2, \ldots, \lambda_n)$ and $(e_1^\prime, \ldots, e_{n-1}^\prime):=(e_2, \ldots, e_n)$. Note that  
\begin{equation}\label{Eq:decomposition}
\sum_{k=1}^n \lambda_k e_k=\lambda_1 e_1+(1-\lambda_1)\sum_{k=1}^{n-1} \lambda_k^\prime e^\prime_k.
\end{equation}
 Further, 
\begin{equation}\label{Eq:normed}
\Vert \sum_{k=1}^{n-1} \lambda_k^\prime e_k^\prime\Vert =1,
\end{equation}
as otherwise \eqref{Eq:decomposition}  implies 
\[\Vert \sum_{k=1}^n \lambda_k e_k\Vert < 1, \]
which is not possible due to \eqref{Eq:linear1}. We abbreviate $r:=\frac{\norm{p-q}}{2}$ and we set 
\begin{align*}
	z&:= r(1-\lambda_1)\sum_{k=1}^{n-1} \lambda_k^\prime e_k^\prime, 
	&
	p'&:=p, & q'&:=p'-2z.
\end{align*}
By the use of \eqref{Eq:normed}, we obtain
\begin{equation}\label{Eq:(p'-q')norm}
\frac{\norm{p'-q'}}{2}  =  r(1-\lambda_1).
\end{equation}
We have 
\begin{equation*}
\frac{p'+q'}{2} = p - z \overset{\eqref{Eq:linear1}}{=} r\sum_{k=1}^n \lambda_k e_k - r(1-\lambda_1)\sum_{k=1}^{n-1} \lambda_k^\prime e_k^\prime \overset{\eqref{Eq:decomposition}}{=} r \lambda_1 e_1
\end{equation*}
and therefore
\begin{align*}
	&\frac{p'+q'}{2} + \frac{\norm{p'-q'}}{2} \Big\{ \sum_{k=1}^{n-1} (-1)^{\epsilon_k}\lambda_k^\prime e_k^\prime : (\epsilon_1, \ldots, \epsilon_{n-1}) \in \{ 0,1\}^{n-1} \Big\} \\
&\quad \overset{\eqref{Eq:(p'-q')norm}}{=} r \Big\{ \lambda_1e_1+\sum_{k=2}^{n} (-1)^{\epsilon_k}\lambda_k e_k : (\epsilon_2 \ldots, \epsilon_{n})\in \{ 0,1\}^{n-1}\Big\}\overset{\eqref{Eq:linear2}}{\subset} A. 
\end{align*}
Thus, we can apply the induction hypothesis to \(p',q'\in A\) and deduce  
\begin{align*}
\sigma(p', p'-2z, \,\cdot \,) = \lambda(p',p'-2z,\,\cdot\,).
\end{align*}
Similarly, we find that 
\begin{equation*}
\sigma(2z-p',-p', \,\cdot\,) = \lambda(2z-p',-p',\,\cdot\,).
\end{equation*}
Now, by the use of Lemma~\ref{Lem:linear} it follows 
\begin{equation*}
\sigma(p',-p',t )\in \big( (1-2t)z+M^{(t)}\left(p'-z, z-p'\right)\big)
\end{equation*}
for all $t\in [0,1]$; consequently, we get 
\begin{equation*}
\sigma(p',-p',t )=(1-2t)p',
\end{equation*}
since $p'-z=r\lambda_1e_1$ is an extreme point of $B_{r\lambda_1}$ and thus we can use Lemma~\ref{Lem:extremepoint} to deduce  $M^{(t)}(p'-z, z-p')=\{(1-2t)(p'-z)\}$. 
Hence, we have
\begin{align*}
 \sigma(p,q,t) = \sigma(p',-p',t ) = (1-2t)p,
\end{align*}
as desired.
\end{proof}
We conclude this section with an example of a closed convex set admitting two distinct consistent conical bicombings.

\begin{example}\label{Ex:twoDistinct}
The following construction is inspired by a similar construction due to
Schechtman. Let \(A\) denote the set of all continuous, strictly increasing functions \(f\colon [0,1]\to [0,1]\) such that \(f(0)=0\) and \(f(1)=1\).
We claim that \((A, \norm{\cdot}_1)\) admits two distinct consistent conical  bicombings. Clearly, as \(A\) is convex, \(\lambda\colon D(A)\to A\) defined by \((f,g,t)\mapsto (1-t)f+tg\) is a consistent conical  bicombing on \((A, \norm{\cdot}_1)\). Graphically, this corresponds to a vertical interpolation of the functions \(f\), \(g\in A\).

Next, we consider the bicombing obtained by interpolating functions \(f\), \(g\in A\) horizontally. To this end, let \(\varphi\colon A\to A\) be defined by \(f\mapsto f^{-1}\). It follows that \(\varphi\) is an isometry of \((A, \norm{\cdot}_1)\). Indeed, this is a simple consequence of the identity
\begin{equation*}
\norm{f-g}_1=\textrm{vol}_2\left(\big\{ (x,y)\in [0,1]^2 : \min\{f(x),g(x)\} \leq y \leq \max\{f(x),g(x)\}\big\}\right)
\end{equation*} 
which holds true for all \(f,g\in A\) and where \(\textrm{vol}_2\) denotes the two-dimensional Lebesgue measure. Let \(\tau\colon D(A)\to A\) be given by the assignment 
\[
(f,g, t)\mapsto \varphi\big( (1-t)\varphi(f)+t\varphi(g)\big).
\]
As \(\varphi\) is an isometry, it follows that \(\tau\) is a consistent conical bicombing. Furthermore, if \(f(x):=\sqrt{x}\) and \(g(x):=x\), then \(\tau( f, g, t)\colon [0,1]\to [0,1]\) is given by 
\begin{equation*}
x\mapsto \frac{-t+\sqrt{4(1-t)x+t^2}}{2(1-t)}
\end{equation*}
for all \(t\in [0,1]\), which is distinct from \(\lambda(f,g,t)=(1-t)f+tg\) for all \(t\in (0,1)\). Hence, \((A, \norm{\cdot}_1)\) admits two distinct consistent conical bicombings, as claimed. Now, let \(B\) denote the closure of \(A\subset L^1([0,1])\). Note that \(\lambda\) and \(\tau\) naturally extend to consistent conical bicombings on \(B\). Hence, we have constructed a closed convex subset of a Banach space that admits two distinct consistent conical bicombings. It is readily verified that \(B\) has empty interior.
\end{example}


\section{Proof of Theorem~\ref{Thm:uniqueness}}\label{sec:ProofofTheorem1.4}


Before we start with the proof of Theorem~\ref{Thm:uniqueness}, let us recall some notions from \cite{miesch2015cartan}. Let \(X\) be a metric space, \(p\in X\) and \(r>0\). We set 
\[
U_{r}(p):=\{q\in X : d(p,q) < r\}.
\]
Let \(U\subset D(X)\) be a subset. We say that \(\sigma\colon U\to X\) is a \textit{convex local  bicombing} if for every \(p\in X\) there is a real number \(r_p>0\) such that
\begin{equation*}
U=\bigcup_{p\in X} D(U_{r_p}(p)). 
\end{equation*}
and if the restriction \(\sigma|_{D(U_{r_p}(p))}\colon D(U_{r_p}(p))\to X\) is a consistent conical bicombing. Furthermore, we say that a geodesic \(c\colon [0,1]\to X\) is \textit{consistent} with the convex local  bicombing \( \sigma \) if for each choice of \(0 \leq s_1 \leq s_2\leq 1\) with  \((c(s_1), c(s_2))\in U_{r_p}(p)\times U_{r_p}(p)\) for some \(p\in X\), it holds 
\begin{equation*}
c((1-t)s_1+ts_2)=\sigma(c(s_1), c(s_2), t)
\end{equation*}
for all \(t\in [0,1]\). Consistent geodesics are uniquely determined by the local  bicombing, compare \cite[Theorem~1.1]{miesch2015cartan} and the proof thereof:

\begin{theorem}\label{Thm:Cartan-Hadamard}
	Let $X$ be a complete, simply-connected metric space with a convex local  bicombing $\sigma$. If we equip \(X\) with the length metric, then for every two points $p,q \in X$ there is a unique geodesic from $p$ to $q$ which is consistent with \(\sigma\) and the collection of all such geodesics is a convex  bicombing.
\end{theorem}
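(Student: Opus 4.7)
The plan is to adapt the classical metric Cartan--Hadamard strategy to the bicombing framework. The key technical tool is the following local-to-global convexity lemma: if $c_1, c_2 \colon [0,1] \to X$ are two geodesics consistent with $\sigma$, parametrized with constant speeds, then the function $f(t) := d(c_1(t), c_2(t))$ is convex on $[0,1]$. To prove it, pick any $t_0 \in (0,1)$, choose $p \in X$ with $c_1(t_0), c_2(t_0) \in U_{r_p/2}(p)$ for the radius $r_p$ from the definition; by continuity both curves remain in $U_{r_p}(p)$ on a subinterval $[t_0 - \delta, t_0 + \delta]$. Since $\sigma$ restricted to $\mathsf{D}(U_{r_p}(p))$ is a consistent conical geodesic bicombing and each $c_i$ coincides there with $\sigma$ applied to its restricted endpoints, the conical inequality provides convexity of $f$ near $t_0$. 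A continuous, locally convex function on a compact interval is globally convex, so $f$ is convex on $[0,1]$. Uniqueness of consistent geodesics from $p$ to $q$ falls out immediately: $f(0)=f(1)=0$ together with convexity forces $f \equiv 0$, hence $c_1 = c_2$.

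For existence I would use the simply-connected hypothesis together with a straightening procedure, working throughout with the (complete) length metric. Given $p, q \in X$, pick any continuous path $\gamma\colon [0,1]\to X$ from $p$ to $q$ and a subdivision $0 = t_0 < \cdots < t_N = 1$ fine enough that each $\gamma([t_{k-1}, t_k])$ lies in a single local neighborhood $U_{r_{p_k}}(p_k)$. Replace $\gamma$ on $[t_{k-1}, t_k]$ by the local geodesic $t \mapsto \sigma(\gamma(t_{k-1}), \gamma(t_k), t)$, and then iterate the straightening through further refinements of the subdivision. The conical inequality on each local piece gives uniform control so that the resulting sequence of broken $\sigma$-geodesics is Cauchy in the length metric and converges to a limit which, by construction, is consistent with $\sigma$ globally. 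Simply-connectedness is then used to show independence from $\gamma$: any two admissible paths are homotopic rel endpoints, the homotopy can be partitioned into finitely many tiles each contained in a local neighborhood, and the local uniqueness from the convexity lemma propagates tile by tile to identify the limits obtained from any two starting paths.

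Setting $\tau(p, q, t)$ equal to this unique consistent geodesic evaluated at time $t$, the convexity lemma applied to $\tau_{pq}$ and $\tau_{p'q'}$ shows that $t \mapsto d(\tau_{pq}(t), \tau_{p'q'}(t))$ is convex on $[0,1]$, so $\tau$ is a convex geodesic bicombing on $X$. The main obstacle, and where the real work lies, is the existence step: converting the topological simply-connected hypothesis into genuine convergence of the straightening procedure, with independence from the approximating path, requires a careful quantitative refinement lemma and a discrete homotopy-lifting argument for broken $\sigma$-geodesics. This is analogous to the classical lifting of a homotopy to the universal cover in the Riemannian Cartan--Hadamard theorem, but must be executed here without any linear or differential structure, relying only on the local conical consistency of $\sigma$ and the completeness of $X$ in its length metric.
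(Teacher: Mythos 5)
The paper gives no proof of this theorem: it is imported from \cite[Theorem~1.1]{miesch2015cartan} (``and the proof thereof''), where the classical Alexander--Bishop/Ballmann Cartan--Hadamard argument is adapted to local geodesic bicombings. Your overall strategy is in that spirit, but two steps have genuine gaps. First, your local-to-global convexity lemma is only justified at times $t_0$ for which $c_1(t_0)$ and $c_2(t_0)$ lie in a \emph{common} neighbourhood $U_{r_p}(p)$. Nothing guarantees such a $p$ exists when the two consistent geodesics are far apart: the radii $r_p$ are purely local data with no uniform lower bound, so you have not shown that $f(t)=d(c_1(t),c_2(t))$ is convex on all of $[0,1]$. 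This undermines both your uniqueness argument (you need global convexity together with $f(0)=f(1)=0$ to force $f\equiv 0$; local convexity near the zero set does not suffice, cf.\ $f(t)=\max\{0,t-t_0\}$) and the final claim that the resulting bicombing is convex. The cited proof circumvents this by propagating convexity along continuous one-parameter families of local geodesics arising from a homotopy, rather than arguing pointwise in $t$ for two arbitrary geodesics.

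Second, the existence step is the heart of the theorem and you only sketch it. The assertion that the iterated straightening of broken $\sigma$-geodesics is Cauchy is precisely what must be proved, and the conical inequality on individual tiles does not by itself yield the required uniform contraction without some uniformity of the radii $r_p$ along the region swept out by the procedure. The argument in \cite{miesch2015cartan} proceeds differently: it establishes a patching/extension lemma (the concatenation of two overlapping local $\sigma$-geodesics is again a local geodesic), uses it to show that the endpoint map from the space of consistent local geodesics issuing from a fixed basepoint is a covering with respect to the length metric, and then invokes completeness and simple connectivity to trivialize this covering; uniqueness and the convex bicombing structure come out of the same construction. As written, your proposal correctly identifies the statements one must aim for, but it does not yet prove them.
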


With Theorem~\ref{Thm:Cartan-Hadamard} on hand it is possible to derive Theorem~\ref{Thm:uniqueness} by the use of Theorem~\ref{Thm:linearInterior}.

\begin{proof}[Proof of Theorem~\ref{Thm:uniqueness}]
Let $\intt{\hspace{-0.1em}(C)}$ denote the interior of $C$ and let $p$, $q\in \intt{\hspace{-0.1em}(C)}$. We abbreviate 
\begin{equation*}
[p,q]:=\big\{ (1-t)p+tq : t\in [0,1]\big\}.
\end{equation*}
As $\intt{\hspace{-0.1em}(C)}$ is convex, we have $[p,q]\subset \intt{(C)}$. For each $z\in C$ we set
\begin{equation*}
r_z:=
\begin{cases}
\min\{ \norm{z-w} : w\in [p,q]\} & \textrm{ if } z\in C\setminus{\intt{\hspace{-0.1em}(C)}}\\
\frac{1}{2}\inf\left\{\norm{z-w} : w\in C\setminus{\intt{\hspace{-0.1em}(C)}}\right\}& \textrm{ if } z\in \intt{\hspace{-0.1em}(C)}. 
\end{cases}
\end{equation*}
Note that \(r_z >0\) for all \(z\in C\) and \(U_{r_z}(z)\cap [p,q]=\varnothing\) if \(z\in C\setminus{\intt{\hspace{-0.1em}(C)}}\). 
Further, for every \(z\in \intt{\hspace{-0.1em}(C)}\) it follows that $B_{2r_z}(z)\subset C$; thus, we may invoke Theorem~\ref{Thm:linearInterior} to deduce that if \(z\in \intt{\hspace{-0.1em}(C)}\), then $\sigma_{z_1z_2}(t)=(1-t)z_1+tz_2$ for all $z_1, z_2\in B_{r_z}(z)$ and all $t\in [0,1]$. We define 
\begin{equation*}
U:=\bigcup_{z\in C} D(U_{r_z}(z)).
\end{equation*}
Note that $\sigma^{\textrm{loc}}:=\sigma|_{U}$ defines a convex local bicombing on $C$. The geodesic $\sigma_{pq}(\cdot)$ and the linear geodesic from $p$ to $q$ are both consistent with $\sigma^{\textrm{loc}}$. Hence, by Theorem~\ref{Thm:Cartan-Hadamard}, we conclude that $\sigma_{pq}(\cdot)$ must be equal to the linear geodesic from $p$ to $q$, that is, we have $\sigma_{pq}(t)=(1-t)p+tq$ for all $t\in [0,1]$. 

Now, suppose \(p,q \in C\). As \(C\) is convex, it is well-known that $C = \overline{\intt{(C)}}$, see \cite[Lemma 5.28]{aliprantis2006infinite}. Let \((p_n), (q_n)\subset \intt{(C)}\) be two sequences such that \(p_n\to p\) and \(q_n\to q\) as \(n\to \infty\). Since \(\sigma\) is a conical  bicombing, it is readily verified that \(\sigma_{p_n q_n}(\cdot)\to \sigma_{pq}(\cdot)\). Hence, \(\sigma_{pq}(\cdot)\) is equal to the linear geodesic from $p$ to $q$. 
\end{proof}

\appendix
\section{Proofs of Propositions~\ref{Prop:convex} and \ref{Prop:convex'}}


For the sake of completeness, we add here the remaining, quite technical details in the proofs of Propositions~\ref{Prop:convex} and \ref{Prop:convex'}, which were stated in Section~\ref{Sec:convex_non-consistent}.

\begin{proof}[Proof of Proposition~\ref{Prop:convex}.]
Recall that \(X\) is equipped with the metric \(d\) induced by the norm
\[
	\| (x,y) \| := \max \big\{ |x| , \tfrac{\sqrt{2}}{2} \| (x,y) \|_2 \big\}.
\]
Clearly,  $\| (x,y) \| = |x|$ if and only if $|y| \leq |x|$. Thus, if  
$d(\sigma_{pq}^\delta(t),\sigma_{p'q'}^\delta(t)) = |x_{pq}(t)-x_{p'q'}(t)|$, then 
\begin{align*}
	2d(\sigma_{pq}^\delta(t),\sigma_{p'q'}^\delta(t))
	&\leq d(\sigma_{pq}^\delta(t-\tau),\sigma_{p'q'}^\delta(t-\tau))+d(\sigma_{pq}^\delta(t+\tau),\sigma_{p'q'}^\delta(t+\tau)).
\end{align*}
Therefore, to show that \(\sigma^\delta\) is a convex bicombing, it remains to prove that
\[
	2 \|\sigma_{pq}^\delta(t)-\sigma_{p'q'}^\delta(t)\|_2 \leq \|\sigma_{pq}^\delta(t-\tau)-\sigma_{p'q'}^\delta(t-\tau)\|_2+\|\sigma_{pq}^\delta(t+\tau)-\sigma_{p'q'}^\delta(t+\tau)\|_2
\]
for $\tau > 0$ sufficiently small, whenever $d(\sigma_{pq}^\delta(t),\sigma_{p'q'}^\delta(t)) = \tfrac{\sqrt{2}}{2} \|\sigma_{pq}^\delta(t)-\sigma_{p'q'}^\delta(t)\|_2$, that is, 
\[|x_{pq}(t)-x_{p'q'}(t)| \leq |y_{pq}(t)-y_{p'q'}(t)|.
\] 
For $(x, 0)\in X_-\cup X_+$ and $(x',y') \in X$, it holds $d((x,0),(x',y'))=|x-x'|$ and therefore 
\[
d(\sigma_{pq}^\delta(t),\sigma_{p'q'}^\delta(t)) = |x_{pq}(t)-x_{p'q'}(t)|
\] 
provided \(x_{pq}(t) \notin (-1,1)\). As a result, we only need to consider points that satisfy $x_{pq}(t),x_{p'q'}(t) \in (-1,1)$. From now on this will be a standing assumption.

First, if $\sigma_{pq}^\delta$, $\sigma_{p'q'}^\delta$ are (piece-wise) linear, then locally they are linear geodesics in a normed space and hence $d(\sigma_{pq}(t),\sigma_{p'q'}(t)) = \|\sigma_{pq}(t)-\sigma_{p'q'}(t)\|$ is locally convex, thus convex. Let us now assume that $\sigma_{pq}^\delta$ is not linear, that is, $p\in X_-$, $q\in X_+$, $l := d(p,q) \geq 4$. In the following, we look at the different options for $\sigma_{p'q'}^\delta$ separately. But before doing so, let us first fix some notation. We define 
\[
p_0 := \sigma_{pq}(t), \quad p_\pm := \sigma_{pq}(t\pm \tau), \quad p_\ast=(x_\ast,y_\ast) \quad (\ast \in \{0,+,-\}),
\] 
and accordingly for $\sigma_{p'q'}^\delta$. Further, we set $D := \delta (l-4)$ and $\epsilon := \tau l$. We then get $y_0=D(1-x_0^2)$,
\[
x_{\pm} = x_0 \pm \epsilon \quad \text{ and }  \quad y_{\pm} = D(1-(x_0 \pm \epsilon)^2).
\]
We proceed by a case distinction involving three cases. In each case, we also tacitly assume that $x_0$, $x_0' \in (-1,1)$ and $|x_0-x_0'| \leq |y_0-y_0'|$. \\
\\
\noindent\textbf{Case 1.} We have $l' := d(p',q') \in [4,l)$ with $p' \in X_\mp$ and $q' \in X_\pm$. We set $\lambda = \frac{l'}{l}$ and $\epsilon'=\lambda \epsilon$. As above, $y_0'=D'(1-x_0'^2), \,\, x'_{\pm} = x_0' \pm \epsilon',$ and $ y_{\pm}' = D'(1-(x_0' \pm \epsilon')^2)$. We claim that
\begin{align*}
	2 \|p_0-p'_0\|_2 &\leq \|p_--p_-'\|_2 + \|p_+-p_+'\|_2
\end{align*}
for $\epsilon>0$ (i.e. $\tau > 0$) sufficiently small. First note that
\[
	\|p_--p_-'\|_2^2 = \|p_0-p'_0\|_2^2 - 2(x_0-x_0')(1-\lambda)\epsilon + (1-\lambda)^2 \epsilon^2 + 2(y_0-y_0')a\epsilon + a^2 \epsilon^2, 
	\]
and
\[
\|p_+-p_+'\|_2^2 = \|p_0-p'_0\|_2^2 + 2(x_0-x_0')(1-\lambda)\epsilon +(1-\lambda)^2 \epsilon^2 + 2(y_0-y_0')b\epsilon + b^2 \epsilon^2,
\]
for 
\begin{align*}
	a &:= 2(x_0D-\lambda x_0'D') - (D-\lambda^2 D')\epsilon, \\[0.1em]
	b &:= -2(x_0D-\lambda x_0'D') - (D-\lambda^2 D')\epsilon,
\end{align*}
with 
\[a+b = -2(D-\lambda^2D')\epsilon, \quad a-b = 4(x_0D-\lambda x_0'D')\]
and either $ab=(D-\lambda^2 D')^2\epsilon^2$ or $ab < 0$ for $\epsilon$ sufficiently small. In the following, we assume $ab<0$. The proof of the other case is similar. We compute
\begin{align*}
	&\|p_--p_-'\|_2^2 \cdot\|p_+-p_+'\|_2^2 = \|p_0-p'_0\|_2^4 \\[0.5em]
	&+ \epsilon^2 \cdot \Big[4ab(y_0-y_0')^2 - 4(x_0-x_0')^2(1-\lambda)^2 + 4(x_0-x_0')(y_0-y_0')(1-\lambda)(a-b) \\[0.5em]
	&+ \big(2(1-\lambda)^2  +a^2 + b^2 - 4(y_0-y_0')(D-\lambda^2D') \big) \cdot \|p_0-p'_0\|_2^2\Big]  + \O(\epsilon^3) 
\end{align*}
	and with $\sqrt{u+t} = \sqrt{u} + \frac{t}{2\sqrt{u}} + \O(t^2)$ and $u= \|p_0-p_0'\|_2^4$,  it follows
\begin{align*}
	2 &\sqrt{\|p_--p_-'\|_2^2 \cdot\|p_+-p_+'\|_2^2} \geq 2 \|p_0-p'_0\|_2^2 \\[0.5em]
	&+ \epsilon^2 \cdot \Big[2(1-\lambda)^2  +a^2 + b^2 + 4ab - 4(y_0-y_0')(D-\lambda^2D')\\[0.5em]
	&\quad + \frac{4(x_0-x_0')(y_0-y_0')(1-\lambda)(a-b)}{(x_0-x_0')^2 + (y_0-y_0')^2}- \frac{4(x_0-x_0')^2(1-\lambda)^2}{(x_0-x_0')^2 + (y_0-y_0')^2} \Big]+ \O(\epsilon^3),
\end{align*}
where we used that \(ab <0\). We get
\begin{align*}
	\big(\|&p_--p_-'\|_2 + \|p_+-p_+'\|_2\big)^2 \geq  4\|p_0-p'_0\|_2^2 \\[0.5em]
	&+ \epsilon^2 \cdot \Big[4(1-\lambda)^2 + 2(a+b)^2- 8(y_0-y_0')(D-\lambda^2D')\\[0.5em]
	&+ \frac{4(x_0-x_0')(y_0-y_0')(1-\lambda)(a-b)}{(x_0-x_0')^2 + (y_0-y_0')^2}- \frac{4(x_0-x_0')^2(1-\lambda)^2}{(x_0-x_0')^2 + (y_0-y_0')^2} \Big] + \O(\epsilon^3).
\end{align*}
We set
\begin{align*}
	C &:=  4(1-\lambda)^2 - 8(y_0-y_0')(D-\lambda^2D') \\[0.5em]
	&\quad + \frac{16(x_0-x_0')(y_0-y_0')(1-\lambda)(x_0D-\lambda x_0'D')}{(x_0-x_0')^2 + (y_0-y_0')^2}- \frac{4(x_0-x_0')^2(1-\lambda)^2}{(x_0-x_0')^2 + (y_0-y_0')^2}.
\end{align*}
By the above,
\[
\big(\|p_--p_-'\|_2 + \|p_+-p_+'\|_2\big)^2\geq 4\|p_0-p'_0\|_2^2 + C \epsilon^2 + \O(\epsilon^3)\geq  4\|p_0-p'_0\|_2^2
\]
for $\epsilon>0$ sufficiently small, provided that \(C>0\). In the following we show that \(C>0\). Assuming $y_0 > y_0'$, we obtain
\begin{align*}
	y_0-y_0'&= (D-D')(1-x_0^2) + D'(x_0'^2-x_0^2) \\[0.5em]
	&\leq \delta (l-l') + \delta (l'-4) (x_0'+x_0)(x_0'-x_0) \leq \delta (l-l') + 4\delta (y_0-y_0'),
\end{align*}
where we used that $D := \delta (l-4)$, our standing assumption that $x_{pq}(t)$, $x_{p'q'}(t) \in (-1,1)$ and also \(4 \leq l' < l \leq 6\). As a result, 
\begin{align*}
	|y_0-y_0'| \leq \frac{\delta}{1-4\delta}(l-l').
\end{align*}
	Moreover, we have 
\begin{align*}
	|D-\lambda^2D'|l^2 = \delta (l-l')(l^2 + ll' + l'^2 - 4(l+l')) \leq 60 \delta (l-l')
\end{align*}
and
\begin{align*}
	|x_0D-\lambda x_0'D'|l &\leq |x_0|(D-\lambda D')l + |x_0-x_0'|D'l' \\[0.5em]
	&\leq \delta (l-l')(l+l'-4) + 12 \delta |y_0-y_0'| \leq (8 \delta + \tfrac{12 \delta^2}{1-4\delta})(l-l').
\end{align*}
	Hence, using that
\begin{align*}
	C l^2 \cdot\|p_0-p_0'\|_2^2 &= \, 4(l-l')^2(y_0-y_0')^2 - 8(y_0-y_0')(D-\lambda^2D')l^2\left((x_0-x_0')^2 + (y_0-y_0')^2\right) \\[0.5em]
	&\quad + 16(x_0-x_0')(y_0-y_0')(l-l')(x_0D-\lambda x_0'D')l.
\end{align*}
we finally get 
\begin{align*}
C l^2 \cdot\|p_0-p_0'\|_2^2 &\geq \big(4- \tfrac{960\delta^2}{1-4\delta} - 128 \delta - \tfrac{192 \delta^2}{1-4\delta}\big) (l-l')^2(y_0-y_0')^2 >0
\end{align*}
for all $\delta <  \frac{1}{40}$. This is in particular true for all \(\delta \leq \frac{1}{64}\).\\
\\
\noindent\textbf{Case 2.} We have that $\sigma_{p'q'}$ is piecewise linear with $p' \notin X_0$ or $q' \notin X_0$. Notice that \(l'=d(p', q')\) satisfies  $l' \in [0,4]$ and also \(l'=\abs{p_x'-q_x'}\). Let $m$ be the slope of $\sigma_{p'q'}$ at $p_0'$. If $p' \in X_-$ and $q' \in X_0$, then
\begin{align*}
	m = \frac{q'_y}{q'_x+1} \leq  \frac{\frac{1}{32}(1-{q'_x}^2)}{1+q_x} = \tfrac{1}{32}(1-q'_x) \leq \tfrac{1}{32}(4-l'),
\end{align*}
and so \(\abs{m} \leq \tfrac{1}{32}(l-l')\). The same estimate also holds in the other cases.  We set $\lambda = \frac{l'}{l}$ and $\epsilon' = \lambda\epsilon$. Observe that $\epsilon' = \tau l'$, and also $x_{\pm}' = x_0' \pm \epsilon'$ and $y_{\pm}' = y_0' \pm m\epsilon'$. We can proceed as in Case 1. with
\begin{align*}
	a &= \lambda m + 2Dx_0 - D\epsilon, \\
	b &= -\lambda m	- 2Dx_0 - D\epsilon,
\end{align*}
and obtain the constant
\begin{align*}
	C &= 4(1-\lambda)^2 - 8(y_0-y_0')D \\[0.5em]
	&+ \frac{8(x_0-x_0')(y_0-y_0')(1-\lambda)(\lambda m +2Dx_0)}{(x_0-x_0')^2 + (y_0-y_0')^2}
	- \frac{4(x_0-x_0')^2(1-\lambda)^2}{(x_0-x_0')^2 + (y_0-y_0')^2}.
\end{align*}
Clearly, 
\begin{align*}
	C l^2 \cdot \|p_0-p_0'\|_2^2 &=  4(l-l')^2(y_0-y_0')^2 - 8(y_0-y_0')Dl^2 \left((x_0-x_0')^2 + (y_0-y_0')^2\right) \\[0.5em]
	&\quad + 8(x_0-x_0')(y_0-y_0')(l-l')(ml' + 2Dx_0l). 
\end{align*}	
Notice that
\begin{align*}
	D = \delta (l-4) \leq \delta(l-l') \quad \text{ and } \quad  y_0-y_0' &\leq D(1-x_0^2) \leq \delta(l-l'),
\end{align*}
where we used that \(y_0'\geq 0\) and \(x_0\in (-1,1)\). Thus, as \(\abs{m}\leq \tfrac{1}{32}(l-l')\), it follows that
\[
C l^2 \cdot \|p_0-p_0'\|_2^2\geq  \left(4- 576 \delta^2 - 1 - 96 \delta \right) (l-l')^2(y_0-y_0')^2 >0 
\]
for all $\delta <  0.026$. \\
\\
\noindent\textbf{Case 3.} We have that $\sigma_{p'q'}$ is linear with $p'$, $q' \in X_0$. Let $m$ again denote the slope of $\sigma_{p'q'}$. We distinguish two subcases. First, if $|m| \leq 1$, then it follows that \(l'=\abs{p_x'-q_x'}\) and so we find that 
\[
\abs{m l'}=\frac{\abs{p_y'-q_y'}}{\abs{p_x'-q_x'}}\, l'=\abs{p_y'-q_y'}\leq \tfrac{1}{32}\leq \tfrac{1}{64}(l-l'),
\]
where we used that \(l\in [4, 6]\) and \(l'\in [0,2]\).
For $\lambda = \frac{l'}{l}$ and $\epsilon' = \lambda\epsilon$, we get $x_{\pm}' = x_0' \pm \epsilon'$ and also $y_{\pm}' = y_0' \pm m\epsilon'$. Hence, by exactly the same reasoning as in Case 2., we obtain the constant
\begin{align*}
	C &=  4(1-\lambda)^2 - 8(y_0-y_0')D \\
	&\quad + \frac{8(x_0-x_0')(y_0-y_0')(1-\lambda)(\lambda m +2Dx_0)}{(x_0-x_0')^2 + (y_0-y_0')^2}- \frac{4(x_0-x_0')^2(1-\lambda)^2}{(x_0-x_0')^2 + (y_0-y_0')^2}.
\end{align*} 
We compute
\begin{align*}
	C l^2 \cdot \|p_0-p_0'\|_2^2 &= 4(l-l')^2(y_0-y_0')^2 - 8(y_0-y_0')Dl^2 \left((x_0-x_0')^2 + (y_0-y_0')^2\right) \\[0.5em]
	&\quad + 8(x_0-x_0')(y_0-y_0')(l-l')(ml' + 2Dx_0l), 
\end{align*}
and so it follows that
\[
	C l^2 \|p_0-p_0'\|_2^2\geq  \big(4- 576 \delta^2 - \tfrac{1}{8}- 96 \delta \big) (l-l')^2(y_0-y_0')^2 >0
\]	
for $\delta <  0.033$. We now treat the second subcase and assume that $|m|>1$.  In this case, we have 
\begin{align*}
	l' &= \tfrac{\sqrt{2}}{2} \sqrt{(q'_x-p'_x)^2+(q'_y-p'_y)^2}\leq |q'_y-p'_y| \leq \tfrac{1}{32}.
\end{align*}
Furthermore, let $\epsilon_x'= x'_+-x'_0$ and $\epsilon_y'=y'_+-y'_0$. Then $\epsilon_y' = m \epsilon_x'$ and
\begin{align*}	
	2(\tau l')^2 &= {\epsilon_x'}^2+(m\epsilon_x')^2 = (1+m^2){\epsilon_x'}^2.
\end{align*}
Thus we get $\epsilon_x' = \lambda_x \epsilon$ for $\lambda_x :=\lambda \cdot\frac{\sqrt{2}}{\sqrt{1+m^2}}$, and $\epsilon_y' = \lambda_y \epsilon$ for 
\[
\lambda_y := m \lambda_x = \lambda \cdot \frac{\sqrt{2}m}{\sqrt{1+m^2}}.
\]
Notice that $x_{\pm}' = x_0' \pm \epsilon'_x$ and $y_{\pm}' = y_0' \pm \epsilon'_y$. We proceed again as before and obtain
\begin{align*}
	C &= 4(1-\lambda_x)^2 - 8(y_0-y_0')D \\[0.5em]
	&\quad + \frac{8(x_0-x_0')(y_0-y_0')(1-\lambda_x)(\lambda_y +2Dx_0)}{(x_0-x_0')^2 + (y_0-y_0')^2}- \frac{4(x_0-x_0')^2(1-\lambda_x)^2}{(x_0-x_0')^2 + (y_0-y_0')^2}.
\end{align*}
Notice that \(\lambda_x\leq \frac{1}{128}\leq 1-\frac{\sqrt{2}}{2}\), and so
\[
\frac{\sqrt{2}}{2} \leq 1-\lambda_x,
\]
which implies that
\[
\lambda_y = \frac{l'}{l}\cdot \sqrt{\frac{2}{1+\frac{1}{m^2}}} \leq \frac{1}{64}(1-\lambda_x)
\]
as well as 
\begin{align*}
	D \leq \delta(l-l') \leq 6 \delta (1-\lambda_x).
\end{align*}
Since
\begin{align*}
	C \cdot\|p_0-p_0'\|_2^2 &=  4(1-\lambda_x)^2(y_0-y_0')^2 - 8(y_0-y_0')D \left((x_0-x_0')^2 + (y_0-y_0')^2\right) \\[0.5em]
	&\quad + 8(x_0-x_0')(y_0-y_0')(1-\lambda_x)(\lambda_y + 2Dx_0) 
\end{align*}	
we find by the use of \( y_0-y_0' \leq D\) that
\[
	C\cdot\|p_0-p_0'\|_2^2 \geq  \big(4- 576 \delta^2 - \tfrac{1}{64} - 96 \delta\big) (1-\lambda_x)^2(y_0-y_0')^2 >0
\]
for all $\delta <  0.034$. Observe that as $m \to \infty$, we get $\lambda_x=0$ and $\lambda_y =\sqrt{2} \cdot (l'/l)$, and the same estimates hold. We have considered all cases and so the proposition follows. 
\end{proof}	

\begin{proof}[Proof of Proposition~\ref{Prop:convex'}.]
As observed above, $\tilde{\sigma}^\delta$ is non-consistent and non-reversible. To show convexity, the same arguments as in the proof of Proposition~\ref{Prop:convex} apply. The only new case is $p' \in X_+$ and $q' \in X_-$. In this case, it holds that \(\tilde{\sigma}_{p'q'}^\delta(t) = (x_{p'q'}(t),0)\). With the notions from above with $x_\pm' = x_0' \mp \epsilon'$ for $\epsilon' = \tau l'$ and $\lambda = \frac{l'}{l}$, we obtain the constant
\begin{align*}
	C &=  4(1+\lambda)^2 - 8y_0D \\[0.5em]
	&\quad + \frac{16(x_0-x_0')y_0(1+\lambda)x_0D}{(x_0-x_0')^2 + y_0^2}- \frac{4(x_0-x_0')^2(1+\lambda)^2}{(x_0-x_0')^2 + y_0^2},
\end{align*}
and with the inequalities $D = \delta (l-4) \leq 2 \delta$ and $|y_0| \leq \frac{1}{32}$, we find that
\begin{align*}
	C \cdot\|p_0-p_0'\|_2^2 &=  4(1+\lambda)^2y_0^2 - 8y_0D\left((x_0-x_0')^2 + y_0^2\right) \\[0.5em]
	& + 16(x_0-x_0')y_0(1+\lambda)x_0D. 
\end{align*}
Hence, using our standing assumption that \(\abs{x_0-x_0'}\leq \abs{y_0-y_0'}\), we conclude 
\[
	C \cdot \|p_0-p_0'\|_2^2 \geq  \left(4 -\delta- 32 \delta\right)(1+\lambda)^2y_0^2 >0,
\]
for all $\delta < \frac{1}{64}$, which completes the proof.
\end{proof}


\bibliographystyle{plain}
\bibliography{refs}
\noindent
\textsc{\small{Mathematik Departement, ETH Zürich, Rämistrasse 101, 8092 Zürich, Schweiz}}\\
\textit{E-mail address:}{\textsf{ giuliano.basso@math.ethz.ch}}\\
\textit{E-mail address:}{\textsf{ benjamin.miesch@math.ethz.ch}}


\end{document}